\documentclass[12pt,reqno]{amsart}
\usepackage{graphicx}
\usepackage{hyperref}
\usepackage{psfrag}
\usepackage{pxfonts}
\usepackage{mathrsfs}
\usepackage{color}
\usepackage{amsmath,amssymb}

\numberwithin{equation}{section}

\theoremstyle{plain}
\newtheorem{maintheorem}{Theorem}

\newcommand{\R}{\mathbb{R}}
\newcommand{\N}{\mathbb{N}}
\newcommand{\Z}{\mathbb{Z}}

\newcommand{\T}{\mathbb{T}}

\newtheorem{theorem}{Theorem}[section]

\newtheorem{proposition}[theorem]{Proposition}
\newtheorem{lemma}[theorem]{Lemma}
\newtheorem{definition}[theorem]{Definition}

\theoremstyle{remark}
\newtheorem{remark}[theorem]{Remark}

\begin{document}

\thanks{}

\author{J. Santana C. Costa}
\address{DEMAT-UFMA S\~{a}o Lu\'{i}s-MA, Brazil.}
\email{jsc.costa@ufma.br}

\author{F. Micena}
\address{
  IMC-UNIFEI Itajub\'{a}-MG, Brazil.}
\email{fpmicena82@unifei.edu.br}

%\author{A. Tahzibi}
%\address{Departamento de Matem\'atica,
%  ICMC-USP S\~{a}o Carlos-SP, Brazil.}
%\email{tahzibi@icmc.usp.br}

\renewcommand{\subjclassname}{\textup{2000} Mathematics Subject Classification}

\date{\today}

\setcounter{tocdepth}{2}
\title{A generalized Liv\v{s}ic--Sinai Theorem for endomorphisms} %Liv\v{s}ic--Sinai's Theorem Revisited}
% Rigidity of the Jacobian for Anosov Endomorphisms
% The Livšic–Sinai Theorem for Anosov Endomorphisms
% Jacobian Rigidity and Transitivity for Anosov Endomorphisms
% Periodic Data, Transitivity and Volume Preservation in Anosov Endomorphisms

\maketitle

\begin{abstract}
The classical theorem of Livšic and Sinai states that a transitive $C^2$ Anosov diffeomorphism whose Jacobian along every periodic orbit equals one admits an invariant volume form. Recently, it was observed that the periodic Jacobian condition alone already implies transitivity, rendering the transitivity assumption unnecessary.

We extend this rigidity phenomenon to the non-invertible setting. We prove that if a $C^2$ Anosov endomorphism satisfies the natural periodic Jacobian condition $J(f^n(p)) = \deg(f)^n$ for every periodic point $p,$ such that $f^n(p) = p,$ then the system is automatically transitive and preserves a $C^1$ volume form.

As a key ingredient, we establish a $C^1$ version of the Livšic cohomological theorem for hyperbolic endomorphisms.
\end{abstract}

%\begin{abstract}
%Here, we discuss transitivity for endomorphisms. We show that an Anosov endomorphism satisfying certain conditions at its periodic points is transitive. To prove this, we use tools related to SRB measure and Lyapunov exponents. We also investigate a conjecture by \cite{ABD} which states that if the non-wandering set has a non-empty interior, then the system is transitive. We present two cases in which this conjecture holds for endomorphisms: one where the non-wandering set contains a hyperbolic set with non-empty interior, and another involving a partially hyperbolic endomorphism with accessibility.
%\end{abstract}

%----------------------------------------------------------
\section{Introduction}
%----------------------------------------------------------

Rigidity phenomena in hyperbolic dynamics often arise from the remarkable principle that global dynamical properties can be recovered from periodic data. One of the most influential manifestations of this principle is the Livšic cohomological theorem, which characterizes when a function over a hyperbolic system is a coboundary in terms of sums along periodic orbits.

In the early 1970s, Anatole Livšic developed his celebrated theorem for Anosov systems \cite{L72}, providing a characterization for solutions of the cohomological equation over transitive hyperbolic diffeomorphisms. A short time later, in joint work with Yakov Sinai, Livšic applied these ideas to obtain a rigidity result for Jacobians \cite{LS72}. Their theorem states that a transitive $C^2$ Anosov diffeomorphism whose Jacobian along every periodic orbit equals one necessarily admits an invariant volume form.

The classical Livšic-Sinai Theorem (see, e.g, Theorem 4.14 in \cite{Bow75}, p. 71, or Theorem 19.2.7 in \cite{KH95} p. 611) states that:

\begin{theorem}[Livšic-Sinai, \cite{LS72}]\label{Teo1}
	Let $f:M \to M$ be a transitive $C^2$  Anosov diffeomorphism. Then  \mbox{$Jf^n(p)=1$}, for all $p\in M$ with $f^n(p)=p$ if and only if, $f$ admits an invariant measure $\nu$ absolutely continuos w.r.t.  $m$.
\end{theorem}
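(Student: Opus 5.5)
The plan is to prove the two directions separately, with the Livšic cohomological theorem for the potential $\varphi=\log Jf$ as the central tool. Since $f$ is $C^2$, the function $\varphi$ is $C^1$, hence Hölder.

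\emph{($\Leftarrow$)} Suppose $\nu=h\,m$ is $f$-invariant with $h\in L^1(m)$.
\begin{itemize}
\item First I would show that $\nu$ is ergodic. Transitivity gives ergodicity of any absolutely continuous invariant measure via the Hopf argument along the stable and unstable foliations, which are absolutely continuous.
\item Next, $\nu$ coincides with the SRB measure $\mu^+$ of $f$. Likewise $\nu$ coincides with the SRB measure $\mu^-$ of $f^{-1}$, since $\nu$ has absolutely continuous conditionals on both foliations. Hence $\mu^+=\mu^-$.
\item In equilibrium-state language, $\mu^+$ is the equilibrium state of $-\log J^u f$ and $\mu^-$ that of $\log J^s f$. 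By uniqueness of equilibrium states for Hölder potentials on transitive Anosov systems, the two potentials are cohomologous up to a constant.
\item Both pressures vanish, so the constant is $0$. Therefore $\log Jf=\log J^uf+\log J^sf$ is cohomologous to $0$.
\item Summing along a periodic orbit gives $Jf^n(p)=1$.
\end{itemize}

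Alternatively, for this direction: if $h$ is known to be continuous and positive, then $\log Jf=\log h-\log h\circ f$ directly. The route above avoids needing that regularity.

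\emph{($\Rightarrow$)} Assume $\sum_{k=0}^{n-1}\varphi(f^kp)=0$ whenever $f^np=p$.
\begin{itemize}
\item Pick a point $x_0$ with dense orbit, which exists by transitivity.
\item Define $u(f^kx_0)=\sum_{j<k}\varphi(f^jx_0)$.
\item Show $u$ is uniformly Hölder on the orbit. If $f^kx_0$ and $f^lx_0$ are $\delta$-close, the Anosov closing lemma produces a periodic point $p$ of period $l-k$ shadowing the segment exponentially closely from both ends. The periodic sum vanishes, and the Hölder continuity of $\varphi$ together with hyperbolicity bounds $|u(f^lx_0)-u(f^kx_0)|\le C\,d(f^kx_0,f^lx_0)^\alpha$.
\item Extend $u$ to a continuous function on $M$. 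It satisfies $\varphi=u\circ f-u$.
\item Set $h=e^{-u}$ and $\nu=h\,m$, normalized.
\item Check invariance by change of variables: the density of $f_*\nu$ at $y=f(x)$ is $h(x)/Jf(x)=e^{-u(x)-\varphi(x)}=e^{-u(f(x))}=h(y)$. Hence $f_*\nu=\nu$, and $\nu$ is equivalent to $m$ with continuous density.
\end{itemize}

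The main obstacle is the Hölder estimate in the Livšic step. Concretely, one has to control
\[
\sum_{j=0}^{l-k-1}\bigl|\varphi(f^{k+j}x_0)-\varphi(f^jp)\bigr|
\]
by a geometric series. This uses the exponential closeness of the two orbits from the closing lemma, with distance at most $C\lambda^{\min(j,\,l-k-j)}d(f^kx_0,f^lx_0)$.

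The secondary delicate point is, in ($\Leftarrow$), identifying an absolutely continuous invariant measure with both SRB measures. For that I would rely on absolute continuity of the stable and unstable foliations and on the uniqueness of equilibrium states.
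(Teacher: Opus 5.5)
The paper does not prove this statement. It quotes it from Liv\v{s}ic--Sinai and points to Bowen (Theorem 4.14) and Katok--Hasselblatt (Theorem 19.2.7). Your argument is correct in outline and is the standard equilibrium-state proof.

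Your ($\Rightarrow$) direction uses the same mechanism as Step 5 of the paper's proof of Theorem \ref{TeoA}: solve $\log Jf=u\circ f-u$ by Liv\v{s}ic and take the density $e^{-u}$. The paper invokes its $C^1$ Liv\v{s}ic theorem (Theorem \ref{TeoB}) only because it wants a $C^1$ volume form. For absolute continuity your H\"older solution suffices; just build $u$ along a point with dense \emph{forward} orbit.

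Your ($\Leftarrow$) direction identifies $\nu$ with both $\mu^+$ and $\mu^-$ and then compares potentials. It has no counterpart in the paper, and it is exactly where invertibility enters. For an endomorphism, an absolutely continuous invariant measure is SRB but need not be the inverse SRB measure of Theorem \ref{Teo-M10}. That is why the converse fails there (see the Remark after Theorem \ref{TeoA}). Step 2 of the paper runs the ``$\mu^+=\mu^-$'' mechanism in the other direction, deriving it from periodic data via the closing lemma and the two Pesin formulas.

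Three justifications should be tightened, though none affects the conclusion.

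(i) The Hopf argument does not give ergodicity of $\nu=h\,m$ when $h$ may vanish on a set of positive volume. The generic set is then only $\nu$-full, and subsets of unstable plaques with positive but not full leaf measure need not be matched by stable holonomy. You can skip ergodicity entirely. For a transitive $C^2$ Anosov diffeomorphism, Lebesgue-a.e.\ point is forward generic for $\mu^+$, so for every continuous $\phi$
\[
\int\phi\,d\nu=\lim_{n\to\infty}\int\frac1n\sum_{i=0}^{n-1}\phi\circ f^i\,d\nu=\int\phi\,d\mu^+ ,
\]
that is, $\nu=\mu^+$. The same argument applied to $f^{-1}$ gives $\nu=\mu^-$.

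(ii) Uniqueness of equilibrium states alone does not yield the cohomology. You need Bowen's criterion: $\mu_{\phi_1}=\mu_{\phi_2}$ if and only if $\phi_1-\phi_2$ is cohomologous to a constant. It is proved from the Gibbs property at periodic points together with Liv\v{s}ic.

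(iii) $\log Jf=\log J^uf+\log J^sf$ is not a pointwise identity. The two sides differ by $\log\theta\circ f-\log\theta$, where $\theta(x)$ is the volume of the parallelepiped spanned by orthonormal bases of $E^u_x$ and $E^s_x$. This coboundary telescopes on periodic orbits, where $Jf^n(p)=J^uf^n(p)\,J^sf^n(p)$ exactly, so your last step stands.
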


A natural question is whether the transitivity assumption is genuinely necessary. In recent work \cite{M22}, it was shown that the periodic Jacobian condition alone already forces transitivity. Thus, in the diffeomorphism case, transitivity is not an independent hypothesis but a consequence of the rigidity of periodic data.

\begin{theorem}[ \cite{M22}]\label{Teo2}
	Let $f:M \to M$ be a $C^2$  Anosov diffeomorphism. If  \mbox{$Jf^n(p)=1$}, for all $p\in M$ with $f^n(p)=p$, then $f$ is transitive and admits an invariant $C^1$  volume form.
\end{theorem}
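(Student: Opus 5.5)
The plan is to show first that the periodic Jacobian condition forces the SRB measure of a basic piece to have zero entropy defect. From this, that piece must be open, hence all of $M$, which gives transitivity. The volume form then comes from Theorem \ref{Teo1}, upgraded to $C^1$ regularity.

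\textbf{Step 1: spectral decomposition and an attractor.} By Smale's spectral decomposition, $\Omega(f)=\Lambda_1\cup\dots\cup\Lambda_k$ into basic sets, and periodic points are dense in $\Omega(f)$. At least one basic set, say $\Lambda$, is an attractor. It is then a union of unstable manifolds and carries an SRB measure $\mu^+$, the equilibrium state of $\varphi^u=-\log J^uf$. Symmetrically, some basic set $\Lambda'$ is a repeller and carries an equilibrium state $\mu^-$ of $\varphi^s=\log J^sf$. For $\mu^+$ we have $P_\Lambda(\varphi^u)=0$.

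\textbf{Step 2: a Livšic argument on $\Lambda$.} The condition $Jf^n(p)=1$ says that the periodic sums of $\log Jf=\log J^uf+\log J^sf+(\text{angle terms})$ vanish. By Livšic's theorem on the transitive hyperbolic set $\Lambda$ (periodic points are dense there), $\log Jf|_\Lambda$ is a continuous coboundary. The same holds for $\log\det Df|_{E^u}+\log\det Df|_{E^s}$ after absorbing the angle term, which is itself a coboundary. Hence $\varphi^u$ and $-\varphi^s$ are cohomologous on $\Lambda$, and in particular $\sum\lambda_i^+(\mu)=-\sum\lambda_i^-(\mu)$ for every invariant $\mu$ on $\Lambda$. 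Consequently $\mu^+$ is also the equilibrium state of $\varphi^s$ with $P_\Lambda(\varphi^s)=0$. Since $\varphi^s$ is cohomologous to the stable Jacobian potential, the pressure formula for $f^{-1}$ shows that $\mu^+$ satisfies the Pesin formula for $f^{-1}$.

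This step is the main obstacle: one must pass from ``$\mu^+$ has absolutely continuous conditionals along stable leaves'' to ``$\Lambda$ has nonempty interior.” I would use the Bowen–Ruelle volume lemma. The fact $P_\Lambda(\log J^sf)=0$ means the stable-direction volume of $\epsilon$-neighbourhoods of $\Lambda$ does not decay. Since $\Lambda$ already contains whole unstable leaves, a positive-measure local stable slice of $W^s_{loc}(\Lambda)\cap\Lambda$ together with local product structure yields positive Lebesgue measure of $\Lambda$. An attractor of positive volume contains an open set (Bowen's theorem: a basic set is either of zero volume or an attractor with $W^s(\Lambda)$ open). Running the same argument for the repeller shows it is a union of stable leaves with positive volume.

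\textbf{Step 3: conclusion.} An attractor $\Lambda$ that is also saturated by stable leaves is open and closed, hence $\Lambda=M$ by connectedness. Therefore $\Omega(f)=M$ and $f$ is transitive. Theorem \ref{Teo1} now gives an invariant measure $\nu\ll m$. Its density is $e^{u}$, where $u$ solves $\log Jf=u\circ f-u$. By the $C^1$ Livšic regularity theorem (Journé/de la Llave–Marco–Moriyón: for a $C^1$ cocycle over a $C^2$ transitive Anosov diffeomorphism, a continuous solution is $C^1$), $u$ is $C^1$. Hence $e^{u}m$ is an invariant $C^1$ volume form.
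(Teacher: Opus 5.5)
Your strategy can be made to work, and it differs from the paper's, but the step you flag as the main obstacle is not closed as written. Step 2 ends with two separate facts: the attractor $\Lambda$ has positive volume, and some repeller $\Lambda'$ is a union of stable leaves. Step 3 then uses that the attractor $\Lambda$ \emph{itself} is saturated by stable leaves, and nothing in your argument identifies $\Lambda'$ with $\Lambda$. The result you quote does not supply this either. $W^s(\Lambda)$ is open for every attractor, so that statement gives no interior for $\Lambda$. Even $m(\Lambda)>0$ only yields $s$-saturation through a further theorem, namely that $m(W^u(\Lambda))>0$ forces $\Lambda$ to be a repeller.

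That further theorem is one you already have in hand. The Bowen--Ruelle criterion (\cite{Bow75}, Ch.~4) says a basic set of a $C^2$ Axiom A diffeomorphism is an attractor iff $P_\Lambda(-\log J^uf)=0$. Apply it to $f^{-1}$: its unstable potential is $\log J^sf\circ f^{-1}$, which has the same pressure as $\log J^sf$. So $\Lambda$ is a repeller iff $P_\Lambda(\log J^sf)=0$, which is exactly what you derived. Two small corrections here. The relation is $\varphi^u\sim\varphi^s$, not $\varphi^u\sim-\varphi^s$. Also, you only need $\int\log J^uf\,d\mu+\int\log J^sf\,d\mu=\int\log Jf\,d\mu=0$ for every invariant $\mu$ on $\Lambda$, followed by the variational principle. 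Thus every attractor is also a repeller. It is then $u$- and $s$-saturated, open by local product structure, closed, and therefore equal to $M$. The stable conditionals of $\mu^+$ and the volume-lemma discussion can be dropped.

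For comparison, the paper's argument (for the endomorphism analogue, Theorem \ref{TeoA}) is the time-reversed, measure-theoretic version. It starts from a repeller and its inverse SRB measure, and uses the closing lemma at regular recurrent points to show this measure satisfies Pesin's formula, hence is SRB. Equivalence of its conditionals with leaf volume puts an unstable disc inside the closed repeller, and $s$-saturation then gives interior. Finally, a dense orbit through an interior point, together with continuity of unstable manifolds, gives $u$-saturation. Your repaired argument is much shorter. However, it applies a pressure criterion to $f^{-1}$, which is precisely what is unavailable for endomorphisms; the paper's steps are the ones that survive in the inverse limit. Your final step matches the paper's Step 5, with the density being $e^{-u}$ rather than $e^{u}$. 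Theorem \ref{Teo1} is not needed there once $u$ is known to be $C^1$.
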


The purpose of the present paper is to show that an analogous phenomenon holds in the non-invertible setting of Anosov endomorphisms. In this context, the natural periodic condition becomes

$$
Jf^n(p)=deg(f)^n
$$

reflecting the fact that volume distortion must compensate for the topological degree. Our first main result shows that this condition alone implies transitivity and, moreover, guarantees the existence of an invariant $C^1$ volume form.

A set $\Lambda\subset M$ is called \textit{completely $f$-invariant} if $f(\Lambda)=\Lambda=f^{-1}(\Lambda)$.

\section{Main Results}

From now on we consider $M$ a $C^{\infty}-$compact, connected and boundaryless riemannian manifold. Within this framework, we establish the following results.

\begin{maintheorem}\label{TeoA}
	Let $f:M \to M$ be a $C^2$  Anosov endomorphism with $\Omega(f)$ completely invariant such that  $Jf^n(p)=deg(f)^n=d^n$, for all $p\in M$ with $f^n(p)=p$, then $f$ is transitive. Furthermore, $f$ preserves a (unique) $C^1$ normalized volume form.
\end{maintheorem}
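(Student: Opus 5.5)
Our plan is to reduce the problem to a single basic piece of the spectral decomposition. We then produce an absolutely continuous invariant measure on that piece via a Liv\v{s}ic-type argument, and finally show that the piece must be all of $M$.

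First, since $\Omega(f)$ is completely invariant, the spectral decomposition for Anosov endomorphisms (in the form of Przytycki) splits $\Omega(f)=\Lambda_1\cup\dots\cup\Lambda_k$ into finitely many basic sets. Each basic set is transitive and locally maximal, and periodic points are dense in it. Among these there is an attractor, i.e.\ a basic set $\Lambda$ containing the unstable manifolds of its points. We work in the inverse limit $M^f$, where unstable manifolds are well defined along pre-orbits. Using the unstable Jacobian $J^u$, we build the SRB (Gibbs) measure $\mu^+$ on $\Lambda$ for the potential $-\log J^u f$. Likewise, on a repeller $\Lambda'$ (a basic set containing stable manifolds, which exists by considering the time-reversed dynamics of the natural extension), we build the measure $\mu^-$ for the potential $\log J^s f$, suitably normalized by $\log d$ to account for the $d$ preimages.

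The periodic hypothesis $Jf^n(p)=d^n$ enters at this stage. Write $\log Jf=\log J^uf+\log J^sf$. The condition says that the function $\phi=\log Jf-\log d$ has zero sums along every periodic orbit. We apply the Liv\v{s}ic theorem on each basic set, which is transitive, where it holds for hyperbolic endomorphisms through the inverse limit. It gives $\phi=u\circ f-u$ with $u$ H\"older on each basic set, and in particular the stable and unstable potentials are cohomologous, up to the constant $\log d$. Consequently the Gibbs measures for $-\log J^u f$ and $\log J^s f-\log d$ coincide on each basic set. On the attractor this measure has absolutely continuous conditionals along unstable leaves and Lebesgue-like conditionals along stable ones. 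Local product structure then yields a measure absolutely continuous with respect to $m$ and supported in $\Lambda$, so $m(\Lambda)>0$. Applying the same argument to $\Lambda'$, or using that the basin of the attractor is open, we get that $\Lambda$ has nonempty interior. An attractor with nonempty interior is open and closed in $M$: its unstable leaves fill an open set, and the stable saturation together with local maximality gives closedness. By connectedness $\Lambda=M$, so $f$ is transitive.

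With transitivity established, we return to the global Liv\v{s}ic theorem, in its $C^1$ version for endomorphisms. The cocycle $\log Jf-\log d$ is $C^1$ because $f$ is $C^2$, and it has zero periodic sums, so it equals $u\circ f-u$ with $u\in C^1(M)$. Then the density $\rho=e^{u}$, normalized, defines $\nu=\rho\, m$. This measure satisfies the transfer equation $\sum_{f(y)=x}\rho(y)/Jf(y)=\rho(x)$, because each term is $\rho(x)/d$ and there are exactly $d$ preimages. Hence $\nu$ is $f$-invariant, and $\rho\,dm$ is an invariant $C^1$ volume form. Uniqueness follows from ergodicity of an invariant absolutely continuous measure for a transitive $C^2$ Anosov endomorphism, since two such densities would have an invariant ratio, which is therefore constant.

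The main obstacle is the $C^1$ regularity of the Liv\v{s}ic solution in the non-invertible setting, together with the matching of stable and unstable Gibbs states on a single basic set. Unstable directions depend on the pre-orbit, so $u$ is a priori defined on $M^f$. The first thing to try is to show that $u$ is constant on fibers of the projection, because $\phi$ depends only on the base point and $u$ is regular along stable sets. Then smoothness along stable and unstable foliations, via Journ\'e's lemma, gives $C^1$ regularity.
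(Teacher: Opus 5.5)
Your architecture matches the paper's, mirrored: find one basic piece carrying a measure with absolutely continuous conditionals on both families of leaves, show it has interior, show it is all of $M$, then apply the $C^1$ Liv\v{s}ic theorem. You work on an attractor; the paper works on a repeller. There is a genuine gap, though, because the decisive step is asserted rather than proved.

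Cohomology of $-\log J^uf$ and $\log J^sf-\log d$ on the attractor $\Lambda$ shows that $\mu^+$ is the equilibrium state of both potentials, so $P_\Lambda(\log J^sf-\log d)=0$. It does not, by itself, give $\mu^+$ Lebesgue-like conditionals on stable leaves. The general theory provides such conditionals for the stable-potential equilibrium state on a \emph{repeller}, where local stable leaves lie inside the basic set. On an attractor, $W^s_{loc}(x)\cap\Lambda$ could a priori be a Cantor set, and excluding this is essentially what has to be proved.

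For diffeomorphisms you could close this with Bowen--Ruelle applied to $f^{-1}$. For endomorphisms there is no $f^{-1}$, so you need a characterization valid for \emph{arbitrary} invariant measures. From $h_{\mu^+}=\int\Lambda^u_f\,d\mu^+$ and $\Lambda^u_f+\Lambda^s_f=\log d$ a.e.\ you get $h_{\mu^+}=\log d-\int\Lambda^s_f\,d\mu^+$. You would then need the stable analogue of Theorem~\ref{pesin2}: that this inverse entropy formula forces absolutely continuous stable conditionals (a result of Liu involving folding entropy).

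The paper chooses the repeller precisely so that only the unstable-side characterization is needed. It turns the inverse Pesin formula of the inverse SRB measure $\nu$ into Pesin's formula via the closing lemma, then applies Theorem~\ref{pesin2} to conclude that $\nu$ is SRB. Your aside ``apply the same argument to $\Lambda'$'' points in that direction, but it has the same missing link, and in any case it would give information about $\Lambda'$, not $\Lambda$.

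The later steps also need repair:
\begin{itemize}
\item Positive Lebesgue measure of $\Lambda$ does not give $\mathrm{int}\,\Lambda\neq\emptyset$, and openness of the basin says nothing about the interior of $\Lambda$. The detour through absolute continuity with respect to $m$ is delicate here, since unstable conditionals live on $M^f$, and it is also unnecessary. The paper gets interior directly (Step 3): absolutely continuous conditionals plus closedness of the basic set force a whole local unstable disc into it, and $s$-saturation of the repeller then gives a ball.
\item Your ``open and closed'' sentence addresses closedness, which is automatic. What must be shown is saturation by the other family, i.e.\ stable leaves for an attractor. The paper's Step 4 takes a point of the inverse limit with dense orbit lying over $\mathrm{int}\,\Omega_k$, then iterates small discs so they grow and accumulate on every leaf.
\item That argument needs $\mathrm{int}\,\Omega_k$ to be completely invariant. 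This is where the hypothesis on $\Omega(f)$ is genuinely used, and your sketch never uses it. In your setting you need it so that backward iterates of stable discs in $\mathrm{int}\,\Lambda$ stay in $\Lambda$.
\item From $\log Jf-\log d=u\circ f-u$, the invariant density is $e^{-u}$, not $e^{u}$; the transfer equation gives $\rho(x)/d$ per preimage only for $e^{-u}$.
\end{itemize}
With these fixed, your uniqueness argument via ergodicity of the SRB measure is a valid alternative to the paper's appeal to uniqueness of the inverse SRB measure.
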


We can question whether the converse of this theorem is also true, as is the case with theorems \ref{Teo1} and \ref{Teo2}. That is, If $f$ is volume preserving, then  $Jf^n(p)=d^n$, for all $p\in Per(f)$? For non-invertible Anosov maps, this is not true, as the following observation shows.

\begin{remark}
	There is a volume-preserving Anosov endomorphism $f:\T^3\to\T^3$  with $Jf(p)\neq deg(f)$ for a fixed point $p \in \T^3$.

	In fact, consider the map $g(x)=2x(mod\, 1)$ on $\mathbb{S}^1$ and the fixed point $p=0\in \mathbb{S}^1$. For small $\varepsilon>0$, the function  $h(x)=2x -\varepsilon\sin(2\pi x)(mod\, 1)$ is $C^1-$close to $g$ with $Jh(0)=h'(0)\neq 2$ and $deg(h)=2$. Since $h$ is expanding, it preserves a volume $\mu$ (see \cite{KS95}).
	Now consider $A:\T^2\to \T^2$, the Anosov diffeomorphism induced by the matrix
	$	\left[\begin{array}{cc}
		2 & 1 \\
		1 & 1
	\end{array}
	\right]$. Therefore, the map $f=h\times A$ is a $C^{\infty}$ transitive Anosov endomorphism of $\T^3$ with $deg(f)=2$, preserving the volume $\nu=\mu\times m$. However, for the fixed point $0\in\T^3$, we have $Jf(0)\neq 2$.	
\end{remark}

This example highlights a fundamental difference between the invertible and non-invertible settings: in the endomorphism case, volume preservation alone does not determine the periodic Jacobian data condition.

%In the case of expanding map in the circle $\mathbb{S}^1$ the condition $J(f^n(p))=deg(f)^n=d^n$  imply in the $C^1-$conjugated with a linear expanding map.

%\begin{maintheorem}
%Let $f : \mathbb{S}^1 \to \mathbb{S}^1$ a $C^2$ expanding map what preserve orientation
%and $deg(f)=d$ such that $J(f^n(p))=d^n$ for all $p\in \mathbb{S}^1$ with $f^n(p)=p$. Then $f$ is
% $C^1$-conjugated to $g(x) = dx(\, mod\, 1)$.	
%\end{maintheorem}

%In particular, the Livsic-Sinai Theorem holds for $C^2$ Anosov endomorphisms.
%
%\begin{corollary}
%	Let $f:M \to M$ be a $C^2$  Anosov endomorphism. The following statements are equivalent:
%\begin{itemize}
%	\item[(1)] $f$ admits an invariant measure of the form $d\nu=hdm$ where $h$ is a positive $C^1$ function.
%	\item[(2)] $f$ admits an invariant measure $\nu$ absolutely continuos w.r.t.  $m$.
%	\item[(3)]  $J(f^n(p))=deg(f)^n=d^n$, for all $p\in M$ with $f^n(p)=p$.
%\end{itemize}	 	
%\end{corollary}
%
%
%As a consequence, we have.
%
%\begin{corollary}
%	Among the $C^2$ Anosov endomorphisms the ones that admit
%	no invariant measure $\mu \ll m$ are open and dense.
%\end{corollary}

The passage of Theorem \ref{TeoA} from diffeomorphisms to endomorphisms presents additional difficulties. To overcome this, we establish a $C^1$ Livšic theorem for transitive Anosov endomorphisms. This result may be of independent interest, as it provides a smooth cohomological framework adapted to non-invertible hyperbolic dynamics.

%For proof the second part of the Theorem \ref{TeoA}, $f$ preserves a $C^1$ volume form, we need a $C^1$ version of the Livšic Theorem for endomorphims.

\begin{maintheorem}\label{TeoB}
	Let  $f:M\to M$ be a transitive Anosov endomorphism  and \mbox{$\psi:M\to\R$} a $C^1$ function. Suppose that for every $x\in M$, such that $f^n(x)=x$, we have \mbox{$\sum_{i=0}^{n-1}\psi(f^i(x))=0.$} Then there exists a $C^1$ function $\varphi:M\to \R$ such that $\psi=\varphi\circ f-\varphi$ and $\varphi$ is unique up to an additive constant.
\end{maintheorem}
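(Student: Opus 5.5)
We pass to the inverse limit. Let $\widetilde M=\{\tilde x=(x_i)_{i\in\Z}: f(x_i)=x_{i+1}\}$ with the shift $\tilde f$, and let $\pi(\tilde x)=x_0$. Then $\tilde f$ is a hyperbolic homeomorphism (expansive, with shadowing) on a compact metric space. Transitivity of $f$ gives transitivity of $\tilde f$. Periodic orbits of $\tilde f$ correspond exactly to periodic orbits of $f$. For $\tilde\psi=\psi\circ\pi$, which is Lipschitz, the periodic sums therefore vanish. The classical Livšic argument now applies verbatim: fix a point $\tilde x_*$ with dense forward orbit, define $\tilde\varphi(\tilde f^k\tilde x_*)=\sum_{i<k}\tilde\psi(\tilde f^i\tilde x_*)$, and use the Anosov closing lemma (shadowing with exponential closeness) to show $\tilde\varphi$ is uniformly Hölder on the orbit. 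It then extends to a Hölder $\tilde\varphi$ on $\widetilde M$ with $\tilde\psi=\tilde\varphi\circ\tilde f-\tilde\varphi$. Uniqueness up to a constant follows since the difference of two solutions is $\tilde f$-invariant and continuous, hence constant on the dense orbit.

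Next we show that $\tilde\varphi$ depends only on $x_0$, so that it descends to a Hölder $\varphi$ on $M$. Take two points $\tilde x,\tilde y$ with $x_0=y_0$. Their forward orbits project to the same sequence, so $d(\tilde f^n\tilde x,\tilde f^n\tilde y)\to0$ exponentially. This is the "stable" direction in the inverse limit, along which past coordinates are forgotten. Iterating the cohomological equation forward gives
\[
\tilde\varphi(\tilde x)-\tilde\varphi(\tilde y)=\tilde\varphi(\tilde f^n\tilde x)-\tilde\varphi(\tilde f^n\tilde y)-\sum_{i<n}\bigl(\tilde\psi(\tilde f^i\tilde x)-\tilde\psi(\tilde f^i\tilde y)\bigr).
\]
The sum vanishes identically because $\tilde\psi$ depends only on the $0$-th coordinate, and the first two terms tend to $0$ by continuity. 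Hence $\tilde\varphi(\tilde x)=\tilde\varphi(\tilde y)$, and $\varphi\circ\pi=\tilde\varphi$ gives $\psi=\varphi\circ f-\varphi$ on $M$.

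The main step, and the expected obstacle, is upgrading $\varphi$ to $C^1$. The stable foliation $W^s$ of $f$ is well defined on $M$, while unstable manifolds $W^u(\tilde x)$ depend on the past branch $\tilde x$. Along a stable leaf we use forward iteration,
\[
\varphi(y)-\varphi(x)=-\sum_{i\ge0}\bigl(\psi(f^iy)-\psi(f^ix)\bigr),
\]
which converges in $C^1$ along $W^s$ because $\psi$ is $C^1$ and $Df^i|_{E^s}$ contracts. So $\varphi$ is uniformly $C^1$ along stable leaves. Along each local unstable manifold $W^u_{loc}(\tilde x)$ we use backward iteration through the chosen branch,
\[
\varphi(y)-\varphi(x)=\sum_{i\ge1}\bigl(\psi(x_{-i})-\psi(y_{-i})\bigr),
\]
where the $y_{-i}$ are the preimages of $y$ that shadow $x_{-i}$. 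This follows from the equation on $\widetilde M$ and the consistency of $\tilde\varphi$ with $\varphi$, and it is uniformly $C^1$ along $W^u_{loc}(\tilde x)$. The derivatives along both families vary continuously, in $\tilde x$ for the unstable part.

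To conclude, we invoke Journé's regularity lemma: a function that is uniformly $C^{1,\alpha}$ along two transverse continuous foliations with uniformly smooth leaves is $C^{1,\alpha}$. For $\psi\in C^1$ only, we use the continuous-derivative version of Journé, or the Hurder–Katok/de la Llave approach. The subtlety is that the unstable "foliation" is not a genuine foliation on $M$. We handle this by working locally: near $x_0$, fix one branch $\tilde x$ and its neighbouring branches, so that $\{W^u_{loc}(\tilde z)\}$ forms a genuine local continuous foliation transverse to $W^s$, since nearby branches vary continuously. Applying Journé in each such chart gives $\varphi\in C^1$ near $x_0$. The result is independent of the chosen branch because $\varphi$ is defined on $M$. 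By compactness, $\varphi\in C^1(M)$.
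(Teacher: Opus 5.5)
Most of your plan is sound and follows the same route as the paper. You build a continuous Livšic solution, then get leafwise $C^1$ regularity from the forward series along $W^s$ and the backward series along unstable manifolds, and finish with the $C^1$ Journé-type lemma (Lemma~\ref{Lema7}). Your inverse-limit construction of the solution and its descent to $M$ are fine, and more explicit than the paper's one-line Remark~\ref{Remk-Liv}. The unstable series has the wrong sign; it should read $\varphi(y)-\varphi(x)=\sum_{i\ge1}\bigl(\psi(y_{-i})-\psi(x_{-i})\bigr)$. This does not affect regularity.

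\textbf{The gap.} It is in the step you flagged yourself. Continuity of $\tilde z\mapsto W^u_{loc}(\tilde z)$ (Theorem~\ref{PRZ-1}) does not make the local unstable disks of ``neighbouring branches'' into a foliation.
\begin{itemize}
\item A point $z$ near $x_0$ has uncountably many branches that are $\bar d$-close to $\tilde x$, because $\bar d$-closeness constrains only finitely many past coordinates.
\item For a non-special endomorphism, two branches of the same point $q$ that agree for $N$ steps and then separate can have different unstable directions at $q$. Their local unstable disks then both pass through $q$ and cross.
\end{itemize}
So you must attach a specific branch $\sigma(z)$ to each $z$. You must also check that $z'\in W^u_{loc}(\sigma(z))$ forces $\sigma(z')$ to be the branch of $z'$ that shadows $\sigma(z)$. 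Without this, neither the foliation hypothesis of the lemma nor continuity of $z\mapsto D^u\varphi(z)$ is available.

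\textbf{A fix.} Take a small simply connected ball $V\ni x_0$.
\begin{itemize}
\item Each $f^n$ is a trivial covering over $V$. Let $g_n$ be the inverse branch of $f^n$ on $V$ with $g_n(x_0)=x_{-n}$. Uniqueness of lifts gives $f\circ g_{n+1}=g_n$. Set $\sigma(z)=(g_n(z))_n$.
\item Let $z'$ lie in the component of $W^u_{loc}(\sigma(z))\cap V$ containing $z$, and let $\gamma$ be a path from $z$ to $z'$ in that component. Two paths lift $\gamma$ through the covering $f^n$ starting at $g_n(z)$: the path $g_n\circ\gamma$, and the path $t\mapsto$ ($(-n)$-th coordinate of the branch of $\gamma(t)$ shadowing $\sigma(z)$). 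By uniqueness of path lifting they coincide, so $\sigma(z')$ is the shadowing branch.
\item Hence these plaques form a genuine continuous foliation of $V$, transverse to $W^s$. Also $z\mapsto D^u\varphi(z)$ is continuous because $\sigma$ is.
\end{itemize}
This coherent choice of branches is what the paper gets for free by lifting to the universal cover. There a fixed lift $\tilde f$ is an Anosov diffeomorphism with honest stable and unstable foliations, and one works with $\tilde\varphi=\varphi\circ p$. The backward branch of a point is then $p\circ\tilde f^{-n}$, which is automatically consistent. With either fix, your argument closes.
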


In another direction, we investigate the Lyapunov exponents on the set of periodic points of $f$.
Let $R_0(f)$ be the set of Lyapunov regular points of $f$. Given $x \in R_0(f )$, we denote by $\Lambda^u_f(x)$ the sum of all positive Lyapunov exponents at $x$. More precisely,
$$
\Lambda^u_f(x)=\displaystyle\sum_{i=1}^{l}\max\{\lambda_i(x),0\}\cdot dim(E_i(x)).
$$

Analogously, we denote by $\Lambda^s_f(x)$ the sum of all negative Lyapunov exponents at $x \in R_0(f)$.

As an application of Theorem \ref{TeoA}, we show that if these sums are constant on periodic points, then the system is transitive.

\begin{maintheorem}\label{TeoC}
	Let $f:M\to M$ be a $C^2$-Anosov endomorphism with $\Omega(f)$ completely invariant and $deg(f)=d$. If  $\Lambda^u_f$ and  $\Lambda^s_f$ are constant on $Per(f)$, then	 $\Lambda^u_f + \Lambda^s_f=\log d$ and $f$ is transitive.
\end{maintheorem}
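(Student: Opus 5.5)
Our plan is to reduce Theorem \ref{TeoC} to Theorem \ref{TeoA} by proving that the constancy of $\Lambda^u_f$ and $\Lambda^s_f$ on $Per(f)$ forces $Jf^n(p)=d^n$ at every periodic point.

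\textbf{Step 1: exponents at periodic points.} For $p$ with $f^n(p)=p$, the Lyapunov exponents at $p$ are $\frac1n\log$ of the moduli of the eigenvalues of $Df^n(p)$. The Anosov splitting (in the endomorphism sense, along any orbit in the inverse limit) guarantees that the positive exponents come exactly from $E^u$ and the negative ones from $E^s$. Hence
\[
\Lambda^u_f(p)+\Lambda^s_f(p)=\frac1n\log|\det Df^n(p)|=\frac1n\log Jf^n(p).
\]
By hypothesis, $\Lambda^u_f(p)+\Lambda^s_f(p)=c$ for a constant $c$ independent of $p$. Thus $Jf^n(p)=e^{cn}$ for every periodic point of period $n$.

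\textbf{Step 2: showing $c=\log d$.} This is the main obstacle. The idea is to compare with an equilibrium measure: take $\psi=\log Jf - c$, which is $C^1$ and has zero sums along all periodic orbits.

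\begin{itemize}
\item \emph{Transitive case.} If $f$ is transitive, Theorem \ref{TeoB} gives a $C^1$ function $\varphi$ with $\log Jf = c+\varphi\circ f-\varphi$.
\item \emph{General case.} To avoid assuming transitivity, first work on a basic piece of the spectral decomposition of $\Omega(f)$. This uses complete invariance of $\Omega(f)$ together with density of periodic points in $\Omega(f)$ via shadowing.
\end{itemize}

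In the transitive case, the measure $m$ transported by $e^{\varphi}$ has Jacobian constant equal to $e^{c}$. Pushing forward, $\nu(f(A))=e^{c}\nu(A)$ on injectivity domains. Since $f$ is a $d$-to-$1$ covering, summing over the $d$ inverse branches gives
\[
\nu(M)=\sum_{\text{branches}} e^{-c}\nu(M),
\]
so $1=de^{-c}$, i.e.\ $c=\log d$.

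Alternatively, one can argue measure-theoretically, which avoids needing transitivity. Average the equality $Jf^n(p)=e^{cn}$ against periodic measures: by equidistribution of periodic points (Bowen), one gets $\int\log Jf\,d\mu=c$ for every invariant measure. Then compare with the degree via the volume identity
\[
\int_M \sum_{y\in f^{-1}(x)} \frac{1}{Jf(y)}\,dm(x)=1 .
\]
Applying Jensen/iteration to $f^n$ shows $d^ne^{-cn}$ stays bounded above and below, which forces $c=\log d$. We expect this bounded-distortion argument to be the cleanest route, and we would try it first.

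\textbf{Step 3: conclusion.} With $c=\log d$ we have $\Lambda^u_f+\Lambda^s_f=\log d$ on $Per(f)$, hence $Jf^n(p)=d^n$ for every periodic $p$. Since $\Omega(f)$ is completely invariant, Theorem \ref{TeoA} applies and yields that $f$ is transitive.
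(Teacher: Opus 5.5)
Your reduction is the same as the paper's. Step 1 is fine: at a periodic point the sum of all exponents is $\frac1n\log Jf^n(p)$, so $Jf^n(p)=e^{cn}$. Step 3 is also fine: feed $Jf^n(p)=d^n$ into Theorem \ref{TeoA}. The gap is in Step 2, which is the actual content of the theorem.

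Route (a) is circular. Theorem \ref{TeoB} requires transitivity, which is part of what you are proving. A Liv\v{s}ic coboundary on a single basic piece says nothing about Lebesgue measure, since that piece may a priori be Lebesgue-null. (Incidentally, the density with constant Jacobian $e^{c}$ is $e^{-\varphi}$, not $e^{\varphi}$.)

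In route (b), Bowen's equidistribution only concerns the measure of maximal entropy. It does not give $\int\log Jf\,d\mu=c$ for \emph{every} invariant $\mu$. That fact is true, but it comes from the closing lemma at regular recurrent points, as in Step 2 of the proof of Theorem \ref{TeoA}. More seriously, nothing in your sketch carries this information from invariant measures over to $m$, which is the measure the degree identity lives on. The uniform bounded distortion $Jf^n(y)\asymp e^{cn}$ on $M$ that you appeal to is exactly what a global continuous solution of $\log Jf-c=\varphi\circ f-\varphi$ would give, i.e.\ Theorem \ref{TeoB} again. And Jensen applied to your single identity yields only one of the two inequalities, namely $c\ge\log d$.

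The missing ingredient is the one the paper uses in the form of Lemma \ref{LemC}. If every invariant measure gives the continuous function $\log Jf$ the same integral $c$, then $\frac1n\log Jf^n(x)\to c$ \emph{uniformly} in $x\in M$. Hence $e^{n(c-\varepsilon)}\le Jf^n\le e^{n(c+\varepsilon)}$ on all of $M$ for $n\ge N$. Integrating, via $\int_M Jf^n\,dm=d^n$ (or your identity for $f^n$), gives $|c-\log d|\le\varepsilon$. This only shows $d^ne^{-cn}=e^{o(n)}$, not boundedness, but that is enough.

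If you want to keep Jensen, you need both identities together with a Krylov--Bogolyubov argument:
\begin{itemize}
\item First, $n\log d\ge\int\log Jf^n\,dm=\sum_{i<n}\int\log Jf\,d(f^i_{*}m)$. Since Ces\`aro limits of $f^i_{*}m$ are invariant, this gives $c\le\log d$.
\item Second, define $\mu_n$ by $\int g\,d\mu_n=\int_M d^{-n}\sum_{y\in f^{-n}(x)}g(y)\,dm(x)$. Applying Jensen twice to $\int_M\sum_{y\in f^{-n}(x)}Jf^n(y)^{-1}\,dm(x)=1$ gives $n\log d\le\int\log Jf^n\,d\mu_n=\sum_{k=1}^{n}\int\log Jf\,d\mu_k$. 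Since $f_{*}\mu_k=\mu_{k-1}$, Ces\`aro limits of $(\mu_k)$ are invariant, so $c\ge\log d$.
\end{itemize}
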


%\begin{corolA1}\label{Cor2}
%	Let $f:M\to M$ be a $C^2$-Anosov endomorphism with $\Omega(f)$ completely invariant. If $R_0(f)=M$, then	 $f$ is transitive.	
%\end{corolA1}

Regarding the hypotheses, in the results above $\Omega(f)$ is completely invariant. It is not difficult to verify that for Anosov endomorphisms one always has $f(\Omega)=\Omega$. However, it is not known in general whether $f^{-1}(\Omega)=\Omega$. We observe that
linear Anosov endomorphisms of the torus $\T^d$ provide canonical examples satisfying this property. This type of hypothesis appears frequently in the literature; see, for instance, \cite{LPPR24, M-2010, UW04}. It is well known that if $f$ is either transitive or conservative, then $\Omega(f)=M$. In particular,  $\Omega(f)$ is completely invariant. Therefore, our goal can be viewed as investigating the converse implication.

\section{Anosov endomorphisms}

We begin by recalling the notion of Anosov endomorphisms, which extend the classical concept of Anosov diffeomorphisms to the non-invertible setting, introduced in the works \cite{MP75} and \cite{PRZ}, whose characteristic is exhibit uniform hyperbolic behavior along every full orbit.

\begin{definition}[\cite{PRZ}]\label{def1} Let $f:M\rightarrow M$ be a $C^1$ local diffeomorphism. The map $f$ is an \emph{Anosov endomorphism} if there are
	constants  $C>1$ and $\lambda>1$, such
	that, for each orbit $(x_n)_{n\in{\Z}}$ of $f,$ that is, $f(x_n)=x_{n+1},$ there is a splitting
	$T_{x_i}M=E_{x_i}^s\oplus E_{x_i}^u,\,\, \forall i\in\Z$ which is $Df$-invariant and for all $n>0$ we have:
	\begin{enumerate}		
		\item $||Df^n_{x_i}(v^u)||\leq C^{-1}\lambda^n||v^u||,$ for $v^u\in E^u_{x_i}$,
		\item $||Df^n_{x_i}(v^s)||\leq C\lambda^{-n}||v^s||,$ for $v^s\in E^u_{x_i}$.
	\end{enumerate}
\end{definition}

As in the case of Anosov diffeomorphisms, this hyperbolic splitting induces local invariant manifolds.

\begin{theorem} [Theorem 2.1, \cite{PRZ}]\label{Teo-Loc-manifolds}
Let $f$ be an Anosov endomorphism, for each orbit $\bar{x}=(x_n)_{n\in{\Z}}$  the following conditions are satisfied:
\begin{itemize}
	\item[a)] the set $W^s_{\delta}(\bar{x})=\{y\in M; d(f^n(x),f^n(y))<{\delta}; \forall n\leq 0\}$ is a manifold (called the local stable manifold);
	\item[b)] the set $W^u_{\delta}(\bar{x})=\{y\in M;\exists \, (y_n)_{n=-\infty}^0, f(y_n)=y_{n+1}, \,\mbox{and}\, d(x_n,y_n)<{\delta}, \forall n<0 \}$    is a manifold (called the local unstable manifold);
	\item[c)] $T_{x_0}W^s_R(\bar{x})=E^s(x_0)$ and $T_{x_0}W^u_R(\bar{x})=E^u(x_0)$.
\end{itemize}
\end{theorem}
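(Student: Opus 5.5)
The plan is to pass to the inverse limit $\bar M=\{(x_n)_{n\in\Z}: f(x_n)=x_{n+1}\}$ with the shift $\sigma$, and to redo there the classical Hadamard--Perron argument of the diffeomorphism case, fibre by fibre. Fix a full orbit $\bar x=(x_n)$. Work in exponential charts of a uniform radius $r$ at each $x_n$. Choose the local inverse branch $f_{x_n}^{-1}$ of $f$ near $x_{n+1}$ sending $x_{n+1}$ to $x_n$; it is well defined on a uniform ball because $f$ is a local diffeomorphism of a compact manifold, so it is a covering with uniformly sized evenly covered neighbourhoods.

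In these charts, the lifts $\tilde f_n:=\exp_{x_{n+1}}^{-1}\circ f\circ \exp_{x_n}$ form a sequence of $C^1$ maps $T_{x_n}M\supset B_r\to T_{x_{n+1}}M$. Each $\tilde f_n$ is $C^1$-close to $Df_{x_n}$ once $r$ is small, uniformly in $n$ and in the orbit, by uniform continuity of $Df$. The linear parts are hyperbolic with respect to the splitting $E^s_{x_n}\oplus E^u_{x_n}$ given by Definition \ref{def1}, with uniform constants $C,\lambda$. I also need a uniform lower bound on the angle between $E^s$ and $E^u$ along orbits. This follows from compactness of $\bar M$ together with continuity of the splitting in the orbit, which is obtained by the usual cone argument.

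I then apply the Hadamard--Perron theorem for sequences of maps (as in \cite{KH95}, Theorem 6.2.8), after extending the $\tilde f_n$ to global maps by a bump function.
\begin{itemize}
\item It produces a family of Lipschitz graphs $\operatorname{graph}(\phi^u_n)$ over $E^u_{x_n}$ that are invariant under forward iteration, i.e.\ $\tilde f_n$ maps the $n$-th graph into the $(n+1)$-st. It also produces graphs $\operatorname{graph}(\phi^s_n)$ over $E^s_{x_n}$, characterised by forward contraction.
\item The graph transform is a contraction in the $C^0$ norm, and the standard fibre contraction argument gives $C^1$ regularity and tangency at $0$ to $E^u$ and $E^s$ respectively. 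This yields (c).
\end{itemize}

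The remaining step is to identify these graphs, pushed by $\exp$, with the dynamically defined sets in (a) and (b).
\begin{itemize}
\item For the stable part I take the natural reading $n\ge0$ in the definition. A point $y$ lies in $W^s_\delta(\bar x)$ iff $d(f^n(x_0),f^n(y))<\delta$ for all $n\ge 0$. The stable manifold depends only on $x_0$, since the forward orbit is determined. The characterisation follows by the standard cone argument: a point off the graph has its unstable-cone component grow exponentially.
\item For the unstable part, a point $y$ admits a backward orbit $(y_n)_{n\le0}$ that $\delta$-shadows $(x_n)_{n\le 0}$ iff its chart coordinate lies on the graph. The "if" direction is proved by iterating the chosen inverse branches, which contract along the graph. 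The "only if" direction holds because the shadowing preimages must be given by these same branches once $\delta<r$, and backward iteration expands the stable-cone component.
\end{itemize}
Here the unstable manifold genuinely depends on the whole past $(x_n)_{n<0}$, not just on $x_0$, which is why it is indexed by $\bar x$.

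The main obstacle is uniformity over the uncountable set of orbits. The constants in the Hadamard--Perron theorem and the size $\delta$ must not depend on $\bar x$. For this I would first establish continuity of $\bar x\mapsto E^{s/u}$ on the compact space $\bar M$, via cone fields and uniqueness of the invariant splitting. I would then take $r,\delta$ from the resulting uniform angle and the modulus of continuity of $Df$.
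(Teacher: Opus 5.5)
The paper does not prove this statement. It quotes it from Przytycki \cite{PRZ} and uses it as a black box, so there is no internal argument to compare yours against.

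Your outline is the standard proof of this result, and it is correct. It has three parts:
\begin{itemize}
\item Hadamard--Perron applied to the sequence of chart maps $\exp_{x_{n+1}}^{-1}\circ f\circ\exp_{x_n}$ along one fixed full orbit.
\item The backward orbit in (b) is pinned down by the branches $f^{-1}_{x_n}$. This is the genuinely non-invertible input: $f$ is injective on balls of a uniform radius, so a $\delta$-shadowing backward orbit is unique.
\item The radius $\delta$ is made uniform using compactness of $M^f$.
\end{itemize}
Compared with the bare citation, your route shows exactly where non-invertibility enters. It also explains why $W^u_\delta$ must be indexed by the whole past, while $W^s_\delta$ depends only on $x_0$.

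You also repaired the statement correctly, since as printed it is garbled. Part (a) must read $d(f^n(x_0),f^n(y))<\delta$ for $n\ge 0$, and part (b) needs $y_0=y$. Item (1) of Definition~\ref{def1} should read $\|Df^n_{x_i}v^u\|\ge C^{-1}\lambda^n\|v^u\|$.

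When you write it out, fill in three small points.
\begin{itemize}
\item Theorem 6.2.8 of \cite{KH95} requires one-step estimates. So either pass to a Lyapunov norm along $\bar x$, or apply the theorem to an iterate of $f$. The norm uses forward iterates on $E^s$, and on $E^u$ the inverses of $Df^k|_{E^u}$ along the past of $\bar x$.
\item The sets in (a) and (b) are open subsets of the invariant graphs, which is what ``is a manifold'' means here. Openness holds because, on a graph, exponential contraction makes all but finitely many of the defining inequalities automatic.
\item The uniform angle bound between $E^s$ and $E^u$ can be read off directly from $C$, $\lambda$ and $\sup\|Df\|$. You do not need to prove continuity of the splitting on $M^f$ first.
\end{itemize}
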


Anosov endomorphisms exhibit several specific features that distinguish them from Anosov diffeomorphisms. As  definition above, given $f: M \rightarrow M$ an Anosov endomorphism and $ \bar{x} = (x_n)_{n \in \mathbb{Z}}, \bar{y} = (y_n)_{n \in \mathbb{Z}} $ two different orbits for $f$ such that $x_0 = y_0,$ then $E^s_f(x_0) = E^s_f(y_0),$ however, it is possible that $E^u_f(x_0) \neq E^u_f(y_0),$ where $E^u_f(x_0)$ is the unstable direction defined by $\bar{x} $ and $E^u_f(y_0)$ is the unstable direction defined by $\bar{y}.$ By  Theorem 2.1 of \cite{PRZ}, these directions are integrable to unstable local discs $W^u(\bar{x})$ and $W^u(\bar{y})$, so a point $x$ can have more than one local unstable manifolds.

Given $f:M\to M,$ we call it  \textit{inverse limit space} the set

$$
M^f=\left\{ (x_n)_{n\in {\Z}} \in  \prod_{i\in{\Z}} M_i; \,\,\, M_i=M \,\, {\rm and} \,\, f(x_n)=x_{n+1} \right\}.
$$

Assuming that $M$ is a compact metric  and $f$ is continuous, it is straightforward to verify that $M^f$ is also a compact metric space when endowed with the metric
$$
\bar{d}(\bar{x},\bar{y})=\sum_{i\in{\Z}}\frac{d(x_i,y_i)}{2^{|i|}},
$$
where $d(\cdot,\cdot)$ denotes the metric on $M$.

In the inverse limit space $M^f$, the \textit{shift map} $\bar{f}: M^f \to M^f$ is defined by $\bar{f}((x_n)_{n \in \mathbb{Z}}) = (x_{n+1})_{n \in \mathbb{Z}}$. The projection onto the zeroth coordinate is given by $\pi: M^f \to M$, where $\pi(\bar{x}) = x_0$ for any $\bar{x} = (x_n)_{n \in \mathbb{Z}}$. The continuity of $\pi$ is easily verified.

One of the features of Anosov diffeomorphims is the regular dependence of invariant manifolds on the base point. In the endomorphism setting, this property remains valid when considering the inverse limit space.

\begin{theorem}[Theorem 2.5, \cite{PRZ}]\label{PRZ-1}
	Let $f:M\rightarrow M$ be a $C^1$  an Anosov endomorphism, consider $x_n\in M^f$ converging to $\bar{x}\in M^f$, then
	$$
	W^u_{\delta}(\bar{x}_n)\to W^u_{\delta}(\bar{x}) \,\,\, \mbox{and} \,\,\, W^s_{\delta}(\bar{x}_n)\to W^s_{\delta}(\bar{x})$$
	in the $C^1$ topology.
\end{theorem}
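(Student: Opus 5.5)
This is Theorem 2.5 of \cite{PRZ}, which the paper cites rather than proves; here is how I would reprove it by the Hadamard--Perron graph transform, run along orbits in $M^f$ instead of along single points.

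\textbf{Local coordinates along an orbit.} Fix $\delta_0$ smaller than the injectivity radius and than the size of the neighbourhoods on which $f$ is a diffeomorphism onto its image. For $\bar{x}=(x_n)\in M^f$, define local maps
$$
F_n^{\bar{x}}=\exp_{x_{n+1}}^{-1}\circ f\circ \exp_{x_n}:B(0,\delta_0)\subset T_{x_n}M\to T_{x_{n+1}}M .
$$
These are $C^1$ diffeomorphisms onto their images, with $F_n^{\bar x}(0)=0$ and $D_0F_n^{\bar x}=Df_{x_n}$. Because $f$ is uniformly $C^1$ on the compact $M$, the family $\{F^{\bar x}_n\}$ is uniformly equicontinuous in the $C^1$ topology. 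Moreover $F^{\bar x}_n$ depends continuously on the single coordinate $x_n$.

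\textbf{Continuity of the splitting.} Next I would show that $\bar{x}\mapsto E^u(\bar x)$ and $\bar{x}\mapsto E^s(x_0)$ are continuous on $M^f$, using invariant cone fields.
\begin{itemize}
\item Uniform hyperbolicity gives cones $C^u$ of some fixed width that $Df$ maps strictly inside themselves, with contraction of the projective distance at rate $\lambda^{-2}$.
\item Hence $E^u(\bar x)=\bigcap_{N} Df^N_{x_{-N}}(C^u_{x_{-N}})$.
\item The $N$-th term depends only on $x_{-N},\dots,x_0$, and it approximates the intersection to within $O(\lambda^{-2N})$ uniformly in $\bar x$.
\item Since convergence $\bar x_k\to\bar x$ in the metric $\bar d$ is coordinatewise convergence, $E^u(\bar x_k)\to E^u(\bar x)$.
\item $E^s$ is handled symmetrically with forward iterates. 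It depends only on $x_0$, because the forward orbit of $x_0$ is unique.
\end{itemize}

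\textbf{Local manifolds as fixed points.} Let $\mathcal G$ be the space of sequences $(g_n)_{n\le 0}$ of maps $g_n:E^u_{x_n}(\delta)\to E^s_{x_n}$ with $g_n(0)=0$ and $\mathrm{Lip}(g_n)\le \gamma$. The graph transform $\Gamma_{\bar x}(g)_{n+1}=\operatorname{graph}^{-1}\big(F_n^{\bar x}(\operatorname{graph} g_n)\big)$ is well defined and is a contraction of $\mathcal G$ in the $C^0$ sup-metric with rate $\sim\lambda^{-2}$. Its fixed point has $0$-th component whose graph is $W^u_\delta(\bar x)$.

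\textbf{$C^0$ continuity of $W^u$.} Two features give this.
\begin{itemize}
\item The fixed point is obtained by iterating $\Gamma$ from the zero sequence, and $N$ iterates are within $O(\lambda^{-2N})$ of it, uniformly in $\bar x$.
\item The $0$-th component of the $N$-th iterate depends only on $F^{\bar x}_{-N},\dots,F^{\bar x}_{-1}$ and on the splittings at $x_{-N},\dots,x_0$, which vary continuously.
\end{itemize}
A $3\varepsilon$-argument then gives $C^0$ convergence $W^u_\delta(\bar x_k)\to W^u_\delta(\bar x)$. The same argument with $f$ replaced by the local inverse branches along the forward orbit of $x_0$ gives the result for $W^s_\delta$. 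This is straightforward because $W^s$ involves no choice of past.

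\textbf{$C^1$ continuity, the main obstacle.} With $f$ only $C^1$ there is no Hölder control on $Df$, so I cannot simply estimate derivatives of graphs. I would instead use the Hirsch--Pugh fiber contraction theorem.
\begin{itemize}
\item Augment $\Gamma_{\bar x}$ with its action on sequences of fields of tangent planes, namely continuous sections $P_n$ of $\gamma$-cones over $\operatorname{graph}g_n$, via $DF_n^{\bar x}$.
\item On fibers this action is a uniform contraction of rate $<1$, by the cone estimate.
\item It depends continuously on the base point and on $\bar x$, because $Df$ is uniformly continuous.
\end{itemize}
The fiber contraction theorem then yields an attracting fixed section equal to $DW^u$, so the local manifolds are $C^1$.

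To get continuity in $\bar x$, I would run the truncation argument once more on the combined (graph, plane-field) iteration. The $N$-th iterate approximates the fixed point uniformly, because the base contraction and the fiber contraction are both uniform. Each finite iterate is continuous in the finitely many coordinates $x_{-N},\dots,x_0$. Therefore the tangent planes converge uniformly, which gives $C^1$ convergence $W^u_\delta(\bar x_k)\to W^u_\delta(\bar x)$.

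The delicate point is checking that the uniform fiber-contraction estimate really does not need any regularity beyond uniform continuity of $Df$. I expect this to hold, since only a modulus of continuity enters the continuity of the fiber maps, and the contraction constant comes purely from the cones.
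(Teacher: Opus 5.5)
The paper does not prove this statement. It quotes it from \cite{PRZ} and uses it only as a black box, in Step~4 of the proof of Theorem~\ref{TeoA}. Your sketch is therefore a self-contained reconstruction rather than a variant of an in-paper argument, and its architecture is sound. The $N$-th graph-transform truncation depends only on $x_{-N},\dots,x_0$, through the charts and the splittings there. It approximates the local manifold uniformly in $\bar x$, and a $3\varepsilon$ argument concludes. What this buys over the citation is that it shows only the hyperbolicity constants and a modulus of continuity of $Df$ enter. Two points need repair or sharpening.

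\textbf{The cone step is circular as written.} Invariant cone fields are normally built around a splitting already known to be continuous. Here there is an extra problem: $E^u$ is not a function on $M$. For $Df^N_{x_{-N}}(C^u_{x_{-N}})$ to depend only on $x_{-N},\dots,x_0$, the cones must be built from $E^s$ alone, for instance as the vectors at angle at least $\theta$ from $E^s(x)$, which some $Df^{N_0}$ maps into itself. That presupposes the continuity of $E^s$, which you planned to derive afterwards ``symmetrically''. The forward-iterate analogue has the same problem, since it again needs a continuous reference field you have not yet constructed.

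Use the closedness argument instead:
\[
E^s(x)=\{v:\|Df^n_xv\|\le C\lambda^{-n}\|v\|\ \forall n\ge 0\},\qquad E^u(\bar x)=\{v:\|(Df^n_{x_{-n}})^{-1}v\|\le C\lambda^{-n}\|v\|\ \forall n\ge 0\}.
\]
The reverse inclusions follow by decomposing $v=v^s+v^u$ and letting $n\to\infty$. Both conditions are closed in $(\bar x,v)$. Together with $\dim E^s+\dim E^u=\dim M$, this forces continuity of both bundles on $M^f$. Only after that can both cone fields be built from $E^s$. Also, a one-step contraction of $\Gamma_{\bar x}$ needs either an adapted norm, which lives over $M^f$ rather than $M$, or replacing $f$ by $f^{N_0}$.

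\textbf{The $C^1$ uniformity needs an explicit estimate.} Your ``delicate point'' does hold, but not for the reason you give. The fiber contraction theorem yields $C^1$ regularity, but by itself it gives no rate of convergence, hence no uniformity in $\bar x$. Saying that ``both contractions are uniform'' is not enough, because the fiber error is fed by the base error through the modulus of continuity of $Df$.

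What you need is an explicit estimate. Let $z_{-j}$ be the backward images along $\bar x$ of a point $z_0$ on the $N$-th truncated disk. Let $w_{-j}$ be those of the point $w_0\in W^u_\delta(\bar x)$ with the same $E^u$-coordinate. One checks that $d(z_{-j},w_{-j})\le c\lambda^{-2(N-j)}$, and that the tangent planes $Q_0,R_0$ at $z_0,w_0$ satisfy
\[
d(Q_0,R_0)\le c_0\kappa^{N}+c_1\sum_{j=1}^{N}\kappa^{j-1}\,\omega\bigl(c\lambda^{-2(N-j)}\bigr).
\]
Here $\kappa<1$ is the cone contraction and $\omega$ is a modulus of continuity of $Df$. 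Splitting the sum at $j=N/2$ shows the right-hand side tends to $0$ uniformly in $\bar x$.

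The same estimate, applied to two truncations, gives the $C^1$ Cauchy property. It therefore yields $C^1$ regularity of $W^u_\delta(\bar x)$ and uniform $C^1$ approximation at once, making the fiber contraction theorem unnecessary. Your truncation argument then gives the $C^1$ convergence, and likewise for $W^s_\delta$ via local inverse branches.
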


Let a set $\Lambda\subset M,$ we denote
$\Lambda^f=\{(x_n)_{n\in\Z}\in M^f; x_n\in\Lambda,\,\, \mbox{for\,\,all}\,\, n\in\Z\}$.

\begin{definition}
	Let $f:M\rightarrow M$ be a $C^1$ local diffeomorphism. A set $\Lambda\subset M$ is a \emph{hyperbolic set} of $f$ if $\Lambda$ is non-empty compact set  with $f(\Lambda)=\Lambda$ and  for all  \mbox{$\bar{x} \in \Lambda^f$} there is a continuous $Df$-invariant splitting
	$T_{x_i}M=E_{x_i}^s\oplus E_{x_i}^u,$ $\, \forall i\in\Z$ and	constants  $C>1$ and $\lambda>1$, such
	that
	\begin{enumerate}		
	\item $||Df^n_{x_i}(v^u)||\leq C^{-1}\lambda^n||v^u||,$ for $v^u\in E^u_{x_i}$,
	\item $||Df^n_{x_i}(v^s)||\leq C\lambda^{-n}||v^s||,$ for $v^s\in E^s_{x_i}$.
\end{enumerate}	
\end{definition}

Next, we recall the notion of Axiom $A$ endomorphisms.

\begin{definition}[Definition 3.3, \cite{PRZ}]
	Let $f:M\rightarrow M$ be a $C^1$ local diffeomorphism. The map $f$ is an \emph{Axiom A} endomorphism if:
	\begin{itemize}
		\item $\Omega(f)$ is a hyperbolic set,
		\item $\overline{Per(f)}=\Omega(f)$.
	\end{itemize}
\end{definition}

Anosov endomorphisms automatically satisfy Axiom $A$, as stated in the following proposition.

\begin{proposition}[Proposition 3.2, \cite{PRZ}]\label{PropAA}
	Let $f:M\rightarrow M$ be a  an Anosov endomorphism, then
	$\overline{Per(f)}=\Omega(f)$. In particular, every  Anosov endomorphism is an Axioma A  endomorphism.
\end{proposition}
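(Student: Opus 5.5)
The plan is to derive both conclusions from the behaviour of periodic orbits, using shadowing and density arguments in the inverse limit space $M^f$. Since $f$ is an Anosov endomorphism, it is a local diffeomorphism, so $Df$ is invertible at every point. Also, by Theorem \ref{PRZ-1}, the splitting $E^s\oplus E^u$ varies continuously with $\bar{x}\in M^f$.

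\emph{Inclusion $\overline{Per(f)}\subset\Omega(f)$.} This part is immediate. Every periodic point is non-wandering, and $\Omega(f)$ is closed, so the closure of $Per(f)$ lies in $\Omega(f)$.

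\emph{Inclusion $\Omega(f)\subset\overline{Per(f)}$.} This is the main step. I would use an Anosov closing lemma adapted to $M^f$. Fix $x\in\Omega(f)$ and $\varepsilon>0$, and let $U=B(x,\varepsilon)$. Since $x$ is non-wandering, there are $n$ and $y\in U$ with $f^n(y)\in U$. This gives a finite $\delta$-pseudo-orbit $y, f(y),\dots,f^{n-1}(y)$ whose last jump, from $f^{n-1}(y)$ to $y$, is small. Repeating this block periodically gives a periodic $\delta$-pseudo-orbit in $M$, and it can be indexed over $\Z$.

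\emph{Closing the pseudo-orbit.} The hyperbolicity estimates of Definition \ref{def1} hold uniformly along orbits. So I would apply the standard shadowing argument via local product structure. For nearby $\bar{z}$, $\bar{w}$, the local manifolds $W^u_\delta(\bar{z})$ and $W^s_\delta(\bar{w})$ from Theorem \ref{Teo-Loc-manifolds} intersect transversally in exactly one point, uniformly in the choice of orbits, because of the $C^1$ continuity in Theorem \ref{PRZ-1}.

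Concretely, I would define the map on the local unstable disc at $y$ along the chosen past orbit. It sends a point to its image under $f^n$, then projects along local stable manifolds back to the unstable disc. Hyperbolicity makes this a contraction of the disc under the inverse branches. Its fixed point $z$ satisfies $f^n(z)=z$ and $d(z,y)<C\varepsilon$.

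One can instead work with the contraction directly. The map $f^n$ restricted to a neighbourhood of $y$ is a diffeomorphism onto its image, since $f$ is a local diffeomorphism. Its derivative expands $E^u$ and contracts $E^s$ along the approximate orbit, with cone estimates. A graph transform, or a Brouwer or contraction argument on $W^u\times W^s$ coordinates, then yields a unique fixed point.

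\emph{Main obstacle.} The delicate point is non-invertibility. The unstable direction depends on the chosen past, so the closing construction must be done on the pseudo-orbit in $M^f$. I would handle this by working with the shift $\bar f$ on $M^f$: the periodic pseudo-orbit defines a point of $M^f$, and the fixed point of the shadowing map gives a genuine periodic $\bar{z}\in M^f$. Projecting with $\pi$ gives the periodic point $z$.

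\emph{Conclusion.} Hence $x\in\overline{Per(f)}$. Finally, $\Omega(f)$ is hyperbolic because all of $M$ is, with $\Omega(f)^f\subset M^f$. Therefore $f$ is Axiom A.
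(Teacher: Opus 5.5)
Your overall plan is the standard argument behind this statement. The paper gives no proof: it quotes \cite{PRZ}, and itself invokes the Anosov closing lemma for endomorphisms in Step 2 of the proof of Theorem \ref{TeoA}. The inclusion $\overline{Per(f)}\subset\Omega(f)$ is trivial. The Axiom A part is fine once you add that $f(\Omega)=\Omega$, which is immediate from $\Omega=\overline{Per(f)}$. Everything therefore hinges on closing the $n$-periodic pseudo-orbit built from $y$, and that step is not correct as written.

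The fixed points of your map $z\mapsto W^s_{loc}(f^n(z))\cap W^u_\delta(\bar y)$ are exactly the $z$ with $f^n(z)\in W^s_{loc}(z)$, not $f^n(z)=z$. Sliding along stable leaves discards precisely the stable coordinate that still has to be closed. Also, that map expands. What sends the disc into itself is its inverse $w\mapsto g(H^s(w))$, where $g$ is the inverse branch of $f^n$ along $y,\dots,f^n(y)$ and $H^s$ is the stable holonomy onto $f^n(W^u_\delta(\bar y))$. Since $H^s$ is in general only H\"older, you get its fixed point from Brouwer, not from the contraction principle. You then need a second step: for $\varepsilon$ small and $n$ large, $f^n$ maps $W^s_\rho(z)$ into itself as a contraction, and its fixed point is the periodic point near $y$.

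Your fallback does not repair this as stated. For $d\ge 2$ each point has $d^n$ preimages under $f^n$. So $f^n$ cannot be injective on a neighbourhood of $y$ whose size is independent of $n$. A graph-transform or contraction argument must be run on the Bowen ball $\{w: d(f^jw,f^jy)<r,\ 0\le j\le n\}$, or one step at a time in charts along the orbit segment.

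Likewise, a pseudo-orbit is not a point of $M^f$, whose elements are genuine orbits. The correct use of $M^f$ is the shadowing lemma with uniqueness:
\begin{itemize}
\item every $\delta$-pseudo-orbit $(x_k)_{k\in\Z}$ of $f$ is $\varepsilon$-traced by some $\bar z\in M^f$;
\item $\bar z$ is unique, because two full orbits $\bar z,\bar z'$ that stay $\varepsilon$-close for all $k\in\Z$ coincide. Indeed $z_0'\in W^s_\varepsilon(z_0)\cap W^u_\varepsilon(\bar z)=\{z_0\}$, and injectivity of $f$ on small balls propagates this equality to the pasts.
\end{itemize}
For your $n$-periodic pseudo-orbit, $\bar f^n(\bar z)$ traces the same sequence. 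Hence $\bar f^n(\bar z)=\bar z$, and $\pi(\bar z)\in Per(f)$ lies within $\varepsilon$ of $y$. Once this closing lemma is either proved along these lines or cited, as the paper implicitly does, the rest of your argument goes through.
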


Similarly to the invertible case,  Axioma A  endomorphism satisfy the property of spectral decomposition of the nonwandering set, see  Theorem 3.4.4 of \cite{AH94}. More precisely:

\begin{theorem}[Spectral decomposition of $\Omega(f)$]\label{Teo-Dec-Spec}
If $f$ is Axiom $A$ endomorphism, then $\Omega(f)$ can be written in a unique way as a disjoint union
$\Omega=\bigcup_{i=1}^{l}\Omega_i$, where each $\Omega_i$ is compact, satisfies $f(\Omega_i)=\Omega_i$ and $f$ is transitive on $\Omega_i$. The sets $\Omega_i$ are called the basic sets of $f$. Moreover, each $\Omega_i$ can be further decomposed into a finite disjoint union $\Omega_i=\bigcup_{1\leq j\leq n_i}\Omega_{i,j}$, where $\Omega_{i,j}$ is compact, $f(\Omega_{i,j})=\Omega_{i,j+1}$\,\, ($\Omega_{i,n_i+1}=\Omega_{i,1}$) and $f^{n_i}$ is mixing on each $\Omega_{i,j}$. 	
\end{theorem}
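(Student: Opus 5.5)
The plan is to follow the classical Smale--Bowen route, carried out on the inverse limit space $\Omega^f$ with the shift $\bar f$. On $\Omega^f$ the shift is a homeomorphism, and Theorem \ref{Teo-Loc-manifolds} together with Theorem \ref{PRZ-1} gives local product structure: local stable and unstable sets depend continuously on orbits. Everything will be built from periodic points, using Proposition \ref{PropAA} ($\overline{Per(f)}=\Omega(f)$) and a shadowing lemma for orbits in $\Omega^f$.

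\emph{Step 1: define the classes.} For hyperbolic periodic points $p,q$, write $p\sim q$ if $W^u(\bar p)$ meets $W^s(q)$ transversally and $W^u(\bar q)$ meets $W^s(p)$ transversally. Here $\bar p$ denotes the periodic orbit of $p$ in $M^f$, so its unstable manifold is canonical. Using the $\lambda$-lemma (inclination lemma), which goes through along full orbits, I will show that $\sim$ is an equivalence relation. Local product structure and continuity of the local manifolds then show that nearby periodic points of $\Omega$ are equivalent, so each class has open closure in $\Omega$.

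\emph{Step 2: get transitivity.} Compactness gives finitely many classes, and their closures $\Omega_i$ are pairwise disjoint, compact and permuted by $f$. Transitivity of $f$ on each $\Omega_i$ comes from the shadowing/closing lemma. Given open sets $U,V$ meeting $\Omega_i$, pick periodic $p\in U$ and $q\in V$ in the same class. A transverse heteroclinic point from $W^u(\bar p)$ to $W^s(q)$ yields an orbit segment from near $p$ to near $q$, and iterating a point of $U$ along it reaches $V$. In the inverse-limit picture, the backward branch is chosen along $\bar p$.

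\emph{Step 3: uniqueness and the mixing pieces.} Uniqueness holds because any decomposition into closed, invariant, transitive, disjoint pieces must coincide with the maximal transitive components. For the cyclic decomposition of a given $\Omega_i$, fix a periodic $p\in\Omega_i$ and let $\Omega_{i,1}=\overline{W^u(\bar p)\cap\Omega_i}$, taking unstable manifolds over all periodic points of the orbit appropriately. Set $n_i$ to be the least $n$ with $f^n(\Omega_{i,1})=\Omega_{i,1}$, and prove that $f^{n_i}$ is mixing on $\Omega_{i,1}$ by the standard argument: the unstable disc through $p$ under $f^{kn_i}$ eventually accumulates densely on $\Omega_{i,1}$.

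\emph{Main obstacle.} The difficulty is non-invertibility. Unstable manifolds depend on the chosen past, and $f^{-1}(\Omega)$ may differ from $\Omega$, so sets of the form $W^u(x)\cap\Omega$ are not canonically defined in $M$. I will handle this by doing every construction in $\Omega^f$, where $\bar f$ is an expansive homeomorphism with shadowing. There I will apply the homeomorphism version of spectral decomposition, Bowen's theorem for expansive maps with shadowing, and then project the pieces by $\pi$.

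The projected pieces $\pi(\bar\Omega_{i})$ are compact and satisfy $f(\pi(\bar\Omega_i))=\pi(\bar\Omega_i)$. Transitivity and mixing pass to factors, since $\pi\circ\bar f=f\circ\pi$. Two checks remain, and I expect these to be the real work:
\begin{itemize}
\item disjointness of the projections, which I would prove by showing that a common point forces periodic points of both pieces to be heteroclinically related, using that stable manifolds are independent of the past;
\item equality $\pi(\Omega^f)=\Omega(f)$, which holds because $f(\Omega)=\Omega$ allows every point of $\Omega$ to be given a full orbit inside $\Omega$.
\end{itemize}
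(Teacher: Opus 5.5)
\paragraph{Verdict.} The paper does not prove this statement; it only quotes Theorem~3.4.4 of \cite{AH94}. Your final route is a sound and standard way to get it: restrict the shift to $\Omega^f$, apply the spectral decomposition for expansive homeomorphisms with shadowing, and project by $\pi$. But there is a genuine gap, and it is not where you locate it.

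\paragraph{The missing step: shadowing inside $\Omega^f$.} Bowen's theorem needs $\bar f|_{\Omega^f}$ to have the shadowing property \emph{as a map of the compact space} $\Omega^f$, so pseudo-orbits in $\Omega^f$ must be shadowed by orbits lying in $\Omega^f$. The shadowing lemma for hyperbolic sets only gives a shadowing orbit in $M^f$. Likewise, Theorems~\ref{Teo-Loc-manifolds} and~\ref{PRZ-1} only give local product structure in $M^f$. Nothing in your plan shows that, for nearby $\bar x,\bar y\in\Omega^f$, the bracket orbit lies in $\Omega^f$; here the bracket orbit is the full orbit through $W^u_\varepsilon(\bar x)\cap W^s_\varepsilon(y_0)$ whose past follows $\bar x$. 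Brackets can leave a general hyperbolic set, and this is exactly where the hypothesis $\overline{Per(f)}=\Omega(f)$ has to enter. The standard repair has three steps.
\begin{enumerate}
\item Upgrade $\overline{Per(f)}=\Omega(f)$ to density of $Per(\bar f)$ in $\Omega^f$. Given $\bar x\in\Omega^f$ and $N$, pick a periodic $p$ close to $x_{-N}$ and take the periodic point of $\bar f$ with zeroth coordinate $f^N(p)$. Its coordinates $-N,\dots,N$ are $p,\dots,f^{2N}(p)$, which are close to $x_{-N},\dots,x_N$. This also gives $\Omega(\bar f|_{\Omega^f})=\Omega^f$, which you need because Bowen's theorem only decomposes the nonwandering set.
\item By expansiveness (uniqueness of the shadowing orbit), a periodic pseudo-orbit is shadowed by a periodic $\bar f$-orbit. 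Such an orbit lies in $\Omega^f$ because $Per(f)\subset\Omega(f)$.
\item For nearby $\bar x,\bar y\in\Omega^f$, approximate them by periodic $\bar p,\bar q$. Close up the pseudo-orbit that makes $k$ turns around $\bar p$ and then $k$ turns around $\bar q$, and let $k\to\infty$. The point whose past follows $\bar p$ and whose future follows $\bar q$ then lies in $\overline{Per(\bar f)}\subset\Omega^f$, and by continuity so does the bracket of $\bar x,\bar y$.
\end{enumerate}
Local product structure of $\Omega^f$, with the shadowing point built from brackets, then gives shadowing inside $\Omega^f$.

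\paragraph{The two checks you flag are immediate.} The equality $\pi(\Omega^f)=\Omega(f)$ is just $f(\Omega)=\Omega$, as you say. Disjointness needs no heteroclinic argument. If $\bar x\in\bar\Omega_i$ and $\bar y\in\bar\Omega_j$ with $x_0=y_0$, the two sequences agree in every nonnegative coordinate. Hence $\bar d(\bar f^n\bar x,\bar f^n\bar y)\le 2^{-n}\operatorname{diam}(M)$ for $n\ge 0$. Since these forward orbits stay in the disjoint compact sets $\bar\Omega_i$ and $\bar\Omega_j$, this forces $i=j$. The same argument with $\bar f^{n_i}$ separates the projected mixing pieces, and transitivity and mixing pass to the factor.

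\paragraph{Steps~1--3.} These become superfluous once you invoke Bowen's theorem, and as written they are inconsistent. With your pointwise relation, the class closures are permuted by $f$, so they are the mixing pieces $\Omega_{i,j}$, not sets with $f(\Omega_i)=\Omega_i$. The basic sets are the unions of these closures over $f$-cycles.
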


\begin{definition}
For an Axiom A endomorphism, a basic set $\Lambda$ is a \emph{repeller} if $W^s(\Lambda)=\Lambda$, or equivalently, if there exists a neighborhood $U$ of $\Lambda$ such that $\overline{U} \subset f (U)$.
\end{definition}

\begin{remark}\label{remk-reppeler}
Every Axiom A endomorphism has a repeller. Indeed, since $f$ is an Axiom A endomorphism, by Theorem 3.2.5 of \cite{AH94}, the induced map $\bar{f}$ on the inverse limit space $M^f$  has a repeller $\overline{B}$. Therefore, the set $B=\pi(\overline{B})$ is a repeller for $f$.	
\end{remark}

%In a similar way,  we say that a basic set $\Lambda$ is a \textit{attactor} if $W^u(\Lambda)=\Lambda$. Or equivalently, if there exists a neighborhood $U$ of $\Lambda$ we have $f(\overline{U})\subset U$.
%
%Combining Proposition \ref{pro1} and Theorem \ref{Teo-UW04} below, every Axiom A endomorphism has an attactor.

\section{Inverse limit space and universal cover}\label{sec inverse}

Endomorphisms are endowed with two remarkably rich structures: the inverse limit space $M^f$, as discussed above, and the universal covering $\widetilde{M}$. Each of these structures stratifies the system, disentangles its orbits and yields transverse foliations, allowing us to treat the non-invertible dynamics with similar precision to an invertible system.
Regarding the lifts in the universal cover, we have:

\begin{proposition}[\cite{MP75}]\label{Prop-Mane}
	Let $\widetilde{M}$ be the universal cover of $M$, then $f$ is an Anosov  endomorphism of $M$, if and only if, its lift $\tilde{f}: \widetilde{M} \rightarrow  \widetilde{M}$ is an Anosov  diffeomorphism.
\end{proposition}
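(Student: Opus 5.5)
We prove the two implications separately, working in the universal cover. Write $\pi:\widetilde{M}\to M$ for the covering projection; it is a local isometry once $\widetilde{M}$ carries the pulled-back metric. Since $f$ is a local diffeomorphism of a compact manifold, $f$ is a covering map, so it lifts to a map $\tilde f:\widetilde{M}\to\widetilde{M}$ with $\pi\circ\tilde f=f\circ\pi$. Because $\widetilde{M}$ is simply connected, $\tilde f$ is itself a covering of a simply connected space, hence a homeomorphism. As $\tilde f$ is also a local diffeomorphism, it is a diffeomorphism. The Anosov estimates of Definition~\ref{def1} are local, and $\pi$ is a local isometry, so it remains to compare orbits of $f$ with orbits of $\tilde f$.

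\emph{($\Rightarrow$)} Fix $\tilde x\in\widetilde M$ and consider its full $\tilde f$-orbit $(\tilde f^n(\tilde x))_{n\in\Z}$, which exists since $\tilde f$ is invertible. Projecting gives a full $f$-orbit $x_n=\pi(\tilde f^n\tilde x)$. Definition~\ref{def1} supplies a splitting $E^s_{x_n}\oplus E^u_{x_n}$ along this orbit, with constants $C,\lambda$ that do not depend on the orbit. We pull the splitting back by the isomorphisms $D\pi_{\tilde f^n\tilde x}$. Invariance of the pulled-back splitting under $D\tilde f$ follows from $D\pi\circ D\tilde f=Df\circ D\pi$, and the growth and contraction estimates transfer because $D\pi$ preserves norms. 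Hence $\tilde f$ is uniformly hyperbolic with the same constants $C,\lambda$. Since $D\tilde f$ and $D\tilde f^{-1}$ are bounded (they are lifts of $Df$ and of local inverses of $f$ on compact $M$), this is the Anosov diffeomorphism property for $\tilde f$ on the noncompact manifold $\widetilde M$, understood with uniform constants.

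\emph{($\Leftarrow$)} Let $(x_n)_{n\in\Z}$ be any full $f$-orbit. We lift it to a $\tilde f$-orbit as follows. Choose $\tilde x_0\in\pi^{-1}(x_0)$ and set $\tilde x_n=\tilde f^n(\tilde x_0)$ for $n\ge 0$; then $\pi(\tilde x_n)=x_n$ for $n\ge0$. For negative times we proceed inductively. Given $\tilde x_{n+1}$ with $\pi(\tilde x_{n+1})=x_{n+1}$, we look for $\tilde x_n\in\pi^{-1}(x_n)$ with $\tilde f(\tilde x_n)=\tilde x_{n+1}$. Pick any $\tilde y\in\pi^{-1}(x_n)$. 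Then $\tilde f(\tilde y)$ and $\tilde x_{n+1}$ lie in the same fiber over $x_{n+1}$, so they differ by a deck transformation $\gamma$. The key point is that $\tilde f$ induces a homomorphism $\tilde f_*$ on deck transformations via $\tilde f\circ\gamma'=\tilde f_*(\gamma')\circ\tilde f$, so we need $\gamma$ to lie in the image of $\tilde f_*$ in order to adjust $\tilde y$ within its fiber.

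This is the main obstacle. We avoid it by arguing directly with the fact that $\tilde f$ is a bijection: set $\tilde x_n:=\tilde f^{-1}(\tilde x_{n+1})$. It remains to check that $\pi(\tilde x_n)=x_n$. What we know is $f(\pi\tilde x_n)=\pi(\tilde x_{n+1})=x_{n+1}$, so $\pi(\tilde x_n)$ is some preimage of $x_{n+1}$, but not necessarily $x_n$.

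We fix this by reversing the choice. For each $N$, start the lift at $x_{-N}$: choose $\tilde z^{N}\in\pi^{-1}(x_{-N})$ and push it forward, so that $\pi(\tilde f^{k}\tilde z^N)=x_{-N+k}$ for all $k\ge0$. We only need the splitting at the single time $n$, and $n\ge -N$ once $N$ is large. The point $\tilde w=\tilde f^{N+n}(\tilde z^N)$ projects to $x_n$. Its whole $\tilde f$-orbit projects to $x_{n+k}$ for all $k\ge -N-n$. The stable and unstable subspaces at $\tilde w$ project to subspaces $E^{s,N},E^{u,N}\subset T_{x_n}M$.

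By uniform hyperbolicity and cone arguments, these subspaces are determined up to errors of order $\lambda^{-(N+n)}$ by the finite orbit segment. Letting $N\to\infty$, they converge to a splitting along $(x_n)$ that is $Df$-invariant and satisfies the estimates of Definition~\ref{def1} with the same $C,\lambda$. This gives the Anosov endomorphism property for $f$.
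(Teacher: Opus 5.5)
\textbf{Verdict.} Your argument is correct. The paper gives no proof of this proposition; it is quoted from Ma\~n\'e--Pugh \cite{MP75}, so there is no in-paper argument to compare with.

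What you wrote is the standard proof. The forward direction is a pull-back of the splitting along projected $\tilde f$-orbits. The converse rests on two facts:
\begin{itemize}
\item projections of $\tilde f$-orbits are dense in the space of full $f$-orbits, since any orbit agrees with one on $[-N,\infty)$ once you lift $x_{-N}$ and push forward;
\item uniformly hyperbolic splittings pass to limits.
\end{itemize}
You correctly observe that a generic full orbit does \emph{not} lift. For $x\mapsto 2x$ on $\R/\Z$, only countably many of the uncountably many backward branches of a point lift to $t\mapsto 2t$. That is exactly why the approximation is needed, so simply delete the false start with $\tilde x_n:=\tilde f^{-1}(\tilde x_{n+1})$.

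\textbf{Steps to make explicit.} Several steps are asserted rather than proved.

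(i) $\tilde f$ is a covering because it is a morphism from the covering $f\circ\pi$ to the covering $\pi$ of $M$, both with connected total space. Only then does simple connectivity make it a homeomorphism.

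(ii) In ($\Rightarrow$), you do not mention that the pulled-back splitting on $T\widetilde M$ is well defined and continuous. Both follow from uniqueness of a splitting satisfying uniform estimates, plus a limiting argument in the Grassmannian.

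(iii) In ($\Leftarrow$), your ``cone argument'' needs two facts.
\begin{itemize}
\item $E^{s,N}=\{v\in T_{x_n}M:\sup_{k\ge 0}\|Df^k v\|<\infty\}$ is independent of $N$, because it depends only on the forward orbit, which is lifted exactly.
\item The angle between $E^s$ and $E^u$ of $\tilde f$ is uniformly bounded below. This follows from the uniform $C,\lambda$ and $\sup\|D\tilde f\|<\infty$.
\end{itemize}
With these, suppose the unstable space at one lift of $x_{-N}$ is the graph of $L$ over the unstable space at another. After $m=N+n$ steps it becomes the graph of a map of norm at most $C^2\lambda^{-2m}\|L\|$, so $E^{u,N}$ is Cauchy. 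A plain compactness argument in the Grassmannian would also work.

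$Df$-invariance of the limit holds because, for fixed $N$, all times $n\ge -N$ use the same lifted orbit $\tilde f^k(\tilde z^N)$. The uniform angle bound keeps the limit complementary to $E^s$.

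(iv) The converse implicitly uses that ``Anosov'' for $\tilde f$ on the noncompact $\widetilde M$ means uniform constants for the lifted metric. State this explicitly as part of the definition.
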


The inverse limit space $M^f$ is a  type of fiber bundle, in which its fibers are Cantor's sets.

\begin{definition}
	A quadruple $(Y,X,F,p)$ is a \emph{fiber bundle} if the following conditions are satisfied:
	\begin{itemize}
		\item[(1)] $Y,X$ and $F$ are topological spaces,
		\item[(2)] $\pi:Y\to X$ is a continuous surjection,
		\item[(3)] there exists an open cover $\{U_{\lambda}:\lambda\in\Lambda\}$ of $X$ such that for $\lambda\in\Lambda$ there is a homeomorphism $\varphi_{\lambda}: U_{\lambda}\times F\to \pi^{-1}(U_{\lambda})$ satisfying $p\circ \varphi_{\lambda}(x,y)=x$
	\end{itemize}
\end{definition}

\begin{theorem}[\cite{AH94}, Theorem 6.5.1]\label{Teo-Aoki-Invese-Limit}
% Let $X$ be a compact connected locally connected metric space.
Let $f:M\to M$ be a local diffeomorphism of degree is greater the one, then $(M^f,M,C,p)$ is a fiber bundle where $C$ denotes the Cantor set.
\end{theorem}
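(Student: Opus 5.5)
This follows the standard argument for local triviality of the inverse limit. Fix $x\in M$ and a contractible (hence simply connected) open ball $U\ni x$; a geodesically convex ball will do. Since $f$ is a local diffeomorphism of the compact manifold $M$, it is a finite covering map of degree $d=\deg(f)\ge 2$. The same holds for every $f^n$, which is a $d^n$-sheeted covering. Because $U$ is simply connected, $f^{-n}(U)$ is a disjoint union of $d^n$ open sets $U^{(n)}_{w}$, each mapped homeomorphically onto $U$ by $f^n$. These sheets are organised as a tree: each sheet $U^{(n+1)}_{w'}$ is carried by $f$ homeomorphically onto exactly one sheet $U^{(n)}_{w}$, and each $U^{(n)}_w$ has exactly $d$ such preimage sheets. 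The space of ends of this $d$-ary tree is $F=\{1,\dots,d\}^{\N}$, which is a Cantor set $C$.

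Next I would define the trivialisation $\varphi_U:U\times F\to \pi^{-1}(U)$. For $y\in U$ and a branch $\omega=(w_1,w_2,\dots)$ of the tree, set $x_0=y$ and $x_{-n}=(f^n|_{U^{(n)}_{w_n}})^{-1}(y)$. The forward coordinates are forced to be $x_n=f^n(y)$. Then $\varphi_U(y,\omega)=(x_n)_{n\in\Z}$ is a point of $M^f$ with $\pi\circ\varphi_U(y,\omega)=y$.

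The remaining checks are as follows.
\begin{enumerate}
\item $\varphi_U$ is a bijection onto $\pi^{-1}(U)$. Given $\bar x$ with $x_0\in U$, each $x_{-n}$ lies in a unique sheet. These sheets are nested compatibly under $f$, so they determine a unique branch $\omega$.
\item $\varphi_U$ is continuous. Each coordinate $x_{-n}$ depends only on $w_n$, through the continuous local inverse branch, and the metric $\bar d$ weights the $n$-th coordinate by $2^{-|n|}$. So closeness in $y$ together with agreement of the first $N$ symbols of $\omega$ gives closeness in $M^f$.
\item $\varphi_U^{-1}$ is continuous. The projection to $y$ is just $\pi$. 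For the symbol part, if $\bar x^k\to\bar x$, then $x^k_{-n}\to x_{-n}$ for each fixed $n$. Since the sheets over $U$ are disjoint open sets, $x^k_{-n}$ eventually lies in the same sheet as $x_{-n}$, so the first $N$ symbols eventually agree.
\end{enumerate}

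Covering $M$ by such balls then gives the fiber bundle structure $(M^f,M,C,\pi)$, with $\pi$ the continuous surjection (the paper's $p$). Surjectivity holds because every point has $d\ge1$ preimages, so backward orbits always exist.

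The main obstacle I expect is the labeling of the branches. Making the identification of the tree's ends with the fixed Cantor set $F$ canonical over $U$ requires a consistent enumeration of sheets at every level, and the continuity of $\varphi_U^{-1}$ hinges on the sheets being disjoint. Because $U$ is simply connected, the sheets at each level are genuinely distinct components, so a fixed labeling of the $d$ children of each node works. The hypothesis $d\ge 2$ is what makes $F$ a Cantor set rather than a single point.
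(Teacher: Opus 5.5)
Your proof is correct. The paper gives no proof of this statement; it only cites Aoki--Hiraide, Theorem 6.5.1. So there is nothing in the paper to compare against, and your proof stands as a self-contained version of the standard local-trivialization argument.

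Each step checks out:
\begin{enumerate}
\item A simply connected ball $U$ is evenly covered by every iterate $f^n$ at the same time.
\item The sheets of $f^{-n}(U)$ form a $d$-ary tree. The identity $f^{n+1}|_V=f^n|_{f(V)}\circ f|_V$ forces $f$ to map each level-$(n+1)$ sheet $V$ homeomorphically onto a single level-$n$ sheet. Counting $d\cdot d^n=d^{n+1}$ shows each node has exactly $d$ children.
\item Following the inverse branches along $\omega$ gives a genuine backward orbit, because $f^n$ is injective on each sheet.
\item Continuity of $\varphi_U$ comes from the weights $2^{-|n|}$ in $\bar d$. Continuity of $\varphi_U^{-1}$ comes from coordinatewise convergence together with the sheets being disjoint open sets.
\end{enumerate}

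Your argument uses neither smoothness nor hyperbolicity. It only needs $f$ to be a finite-sheeted covering of a compact connected manifold. The manifold structure is used exactly once, when you choose $U$ simply connected so that one neighbourhood is evenly covered by all $f^n$ simultaneously. That is the one step that would need a different argument over spaces without simply connected neighbourhoods.
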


%Some times we writer $\bar{x}=(\bar{x},x_0)$ to emphasize the point $x_0\in M$ at the zero position of the sequence.

If $f$ is transitive, $\bar{f}$ is transitive too, see Theorem 3.5.3 of \cite{AH94}.

\begin{theorem}\label{Teo-transitive}
	Let $f:M\to M$ be a continuous surjection, then $f$ is topologically transitive if and only if the shift map $\bar{f}:M^f\rightarrow M^f$ is so.
\end{theorem}

%\begin{proposition}[\cite{MP75}]
%	If  $f:M\rightarrow M$ is a local diffeomorphism, then $\tilde{f}:\widetilde{M}\rightarrow\widetilde{M}$ is a diffeomorphism.
%\end{proposition}

%####################################################################
%####################################################################

\section{SRB measures}

In this section, we introduce the concept of SRB measures for endomorphisms and present some of their main properties. In fact, SRB measures play an important role in the ergodic theory of differentiable dynamical systems. For $C^{1+\alpha}-$systems these measures can be characterized as ones that realize the Pesin Formula or equivalently the measures for which the conditional measures are absolutely continuous w.r.t. Lebesgue restricted to local stable/unstable manifolds. We will emphasize our attention on the endomorphism case. Before proceeding with the proof let us give important and useful definitions and results concerning SRB measures for endomorphisms.

First, let us recall an important result.

\begin{theorem}[\cite{QXZ}] Let $(M,d)$ be a compact metric space and $f: M \rightarrow M$ a continuous map. If $\mu$ is an $f-$invariant Borel probability measure, there exist a unique $\bar{f}-$invariant Borel probability measure $\bar{\mu}$ on $M^f,$ such that $\mu(B) = \bar{\mu}(\pi^{-1}(B)).$
\end{theorem}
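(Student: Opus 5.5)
The plan is to build $\bar{\mu}$ by prescribing it on cylinder sets and extending via Kolmogorov's theorem. Uniqueness will then follow from the fact that cylinders generate the Borel $\sigma$-algebra of $M^f$.

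For $n\ge 0$ let $\pi_{-n}:M^f\to M$ be $\pi_{-n}(\bar{x})=x_{-n}$. Since $x_k=f^{k+n}(x_{-n})$ for $k\ge -n$, the coordinate $x_{-n}$ determines all coordinates $x_k$ with $k\ge -n$. Let $\mathcal{B}_n=\pi_{-n}^{-1}(\mathcal{B}(M))$. Because $\pi_{-n}=f\circ\pi_{-n-1}$, we get $\mathcal{B}_n\subset\mathcal{B}_{n+1}$. The union $\bigcup_n\mathcal{B}_n$ is an algebra, and it generates the Borel $\sigma$-algebra of $M^f$: the product topology is generated by finitely many coordinates, and any finite set of coordinates is a continuous function of a single $x_{-n}$.

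\emph{Construction.} On $\mathcal{B}_n$ define $\bar{\mu}(\pi_{-n}^{-1}(B))=\mu(B)$. This is consistent across levels. If $\pi_{-n}^{-1}(B)=\pi_{-n-1}^{-1}(B')$, then $B'=f^{-1}(B)$, since $\pi_{-n-1}$ is surjective (here I use that $f$ is onto; otherwise replace $M$ by $\bigcap_k f^k(M)$, which carries $\mu$). Invariance of $\mu$ then gives $\mu(f^{-1}B)=\mu(B)$. This defines a finitely additive set function on the algebra.

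\emph{Countable additivity.} I expect this to be the only real point needing care. I would use compactness. Each $\pi_{-n}$ is continuous with compact domain, and $\mu$ is a Radon measure on the compact metric space $M$. Hence for $A=\pi_{-n}^{-1}(B)$ one can approximate $B$ from inside by compact sets $K$ and get compact sets $\pi_{-n}^{-1}(K)\subset A$ with measure close to $\bar{\mu}(A)$. A finitely additive set function that is inner regular by compact sets on an algebra is countably additive (the standard Kolmogorov/Alexandrov argument), so Carathéodory extends it to a Borel probability $\bar{\mu}$. Alternatively, one can view $M^f$ as the inverse limit of the system $\cdots\xrightarrow{f}M\xrightarrow{f}M$ and invoke Kolmogorov's consistency theorem for projective limits of compact metric probability spaces directly.

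\emph{Properties.} Taking $n=0$ gives $\bar{\mu}(\pi^{-1}B)=\mu(B)$. For invariance, compute $\bar{f}^{-1}(\pi_{-n}^{-1}B)=\{\bar{x}: x_{-n+1}\in B\}=\pi_{-n}^{-1}(f^{-1}B)$, which has measure $\mu(f^{-1}B)=\mu(B)$. So $\bar{\mu}\circ\bar{f}^{-1}$ agrees with $\bar{\mu}$ on the generating algebra, and hence everywhere.

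\emph{Uniqueness.} Suppose $\bar{\nu}$ is $\bar{f}$-invariant with $\pi_*\bar{\nu}=\mu$. Since $\pi_{-n}=\pi\circ\bar{f}^{-n}$, invariance gives $\bar{\nu}(\pi_{-n}^{-1}B)=\bar{\nu}(\bar{f}^{\,n}\pi^{-1}B)=\bar{\nu}(\pi^{-1}B)=\mu(B)$, using that $\bar{f}$ is a homeomorphism. So $\bar{\nu}$ agrees with $\bar{\mu}$ on the generating algebra $\bigcup_n\mathcal{B}_n$, which is a $\pi$-system, and therefore $\bar{\nu}=\bar{\mu}$.
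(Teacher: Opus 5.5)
Your proof is correct and complete. The paper gives no proof of this statement; it quotes it from \cite{QXZ}, so there is no internal argument to compare with. What you wrote is the standard natural-extension construction, and it covers the full generality of the statement ($M$ compact metric, $f$ only continuous). Each step holds:
\begin{itemize}
\item well-definedness of $\bar\mu$ on $\bigcup_n\mathcal{B}_n$, using surjectivity of $\pi_{-n}$;
\item $\sigma$-additivity from inner approximation by the closed, hence compact, sets $\pi_{-n}^{-1}(K)$, which lie in the algebra;
\item invariance from $\bar f^{-1}(\pi_{-n}^{-1}B)=\pi_{-n}^{-1}(f^{-1}B)$;
\item uniqueness from $\pi_{-n}=\pi\circ\bar f^{-n}$ together with the fact that $\bar f$ is a homeomorphism.
\end{itemize}

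Three assertions are stated without justification, though each has a one-line proof:
\begin{itemize}
\item $\mu$ gives full mass to $M_\infty=\bigcap_k f^k(M)$, because $f^k(M)$ is compact and $\mu(f^k(M))=\mu(f^{-k}(f^k(M)))=1$.
\item $f(M_\infty)=M_\infty$, because for $y\in M_\infty$ the sets $f^{-1}(y)\cap f^k(M)$ are nonempty, compact and nested.
\item $\bigcup_n\mathcal{B}_n$ generates the Borel $\sigma$-algebra. This needs second countability of $M^f$: every open set is then a countable union of basic sets, each depending on finitely many coordinates.
\end{itemize}
In the paper's setting $f$ is a local diffeomorphism of a compact connected manifold, hence a covering map and onto, so the reduction to $M_\infty$ is not needed there.

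An equally standard alternative for the existence part is to define $L(\varphi\circ\pi_{-n})=\int\varphi\,d\mu$ on the algebra of functions depending on one past coordinate. This algebra is dense in $C(M^f)$ by Stone--Weierstrass, and the Riesz representation theorem then supplies $\sigma$-additivity automatically. Your compact-class argument does the same job by hand. It also leaves you directly with the generating $\pi$-system you use for invariance and uniqueness.
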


\begin{definition} A measurable partition $\eta$ of $M^f$ is subordinate to
	$W^u-$manifolds of a system $(f, \mu)$ if for $\bar{\mu}$-a.e. $\bar{x} \in M^f,$ the atom $\eta(\bar{x}),$ containing $\bar{x},$ has the following properties:
	\begin{enumerate}
		\item $\pi|_{\eta(\bar{x})}: \eta(\bar{x})  \rightarrow \pi(\eta(\bar{x}))$ is bijective;
		\item There exists a $k(\bar{x})-$dimensional $C^1-$embedded submanifold $W(\bar{x})$ of $M$ such that $W(\bar{x}) \subset W^u(\bar{x}),$
		$$\pi(\eta(\bar{x})) \subset W(\bar{x})$$
		and $\pi(\eta(\bar{x}))$ contains an open neighborhood of $x_0 $ in $W(\bar{x}).$ This neighborhood
		being taken in the topology of $W(\bar{x})$ as a submanifold of $M.$
	\end{enumerate}
\end{definition}

We observe that by Proposition 3.2 of \cite{QZ}, every there exist such partition for Anosov endomorphism and they are can be taken increasing, that means $\eta$ refines $\bar{f}(\eta) .$ Particularly $ \pi(\eta(\bar{f}(\bar{x}))) \subset \pi(\bar{f}(\eta(\bar{x}))) .$

\begin{definition} Let $f: M \rightarrow M$ be a $C^2-$endomorphism preserving an invariant Borel probability $\nu.$
	The measure $\nu$ has the SRB property if for every measurable partition $\eta $ of $M^f$ subordinate
	to $W^u-$manifolds of $f$ with respect to $\nu$, we have $\pi_{\ast}\bar{\nu}_{\eta{(\bar{x})}} \ll m^u_{\pi(\eta(\bar{x}))},$ for $\bar{\nu}-$a.e. $\bar{x}$, where
	$\{\bar{\nu}_{\eta{(\bar{x})}} \}_{\bar{x} \in M^f}$
	is a canonical system of conditional measures of $\bar{\nu}$ associated with $\eta,$
	and $m^u_{\pi(\eta(\bar{x}))} $ is the Lebesgue measure on $W^u(\bar{x})$ induced by its inherited Riemannian metric as a submanifold of $M$ restricted to $\pi(\eta(\bar{x}))$.
\end{definition}

In the case of the above definition, if we denote by $\rho^u_f$ the densities of conditional measures $\bar{\nu}_{\eta({\bar{x}})},$ the following relation holds
\begin{equation} \label{conditionalU}
	\rho^u_f(\bar{y}) =
	\frac{\Delta^u_f(\bar{x}, \bar{y} )}{L(\bar{x})},
\end{equation}
for each $\bar{y} \in \eta({\bar{x}}),$ where
$$ \Delta^u_f(\bar{x},\bar{y}) = \displaystyle\prod_{k=1}^{\infty} \frac{J^uf(x_{-k})}{J^uf(y_{-k})}, \bar{x} = (x_k)_{k \in \mathbb{Z}}, \bar{y} = (y_k)_{k \in \mathbb{Z}} $$
and
$$L(\bar{x}) = \int_{\eta(\bar{x})} \Delta^u_f(\bar{x}, \bar{y}) d \bar{m}^u_{\eta({\bar{x}})}(\bar{y}).$$

The measure $\bar{m}^u_{\eta({\bar{x}})}$ is such that $\pi_{\ast}\bar{m}^u_{\eta({\bar{x}})}(B) = m^u_{p(\eta({\bar{x}}))}(B).$ Therefore $$\pi_{\ast}\bar{\nu}_{\eta(\bar{x})} \ll m^u_{p(\eta({\bar{x}}))}, $$ and
$$\rho^u_f(y) =
\frac{\Delta^u_f(\bar{x}, \bar{y})}{L(\bar{x})}, y \in p(\eta({\bar{x}})).$$
For these formulas, see \cite{QZ}.

The next theorems characterize SRB-property via Pesin's formula.
\begin{theorem}[\rm Pesin Entropy Formula for Endomorphisms {\cite{PDL1}}]\label{pesin1} Suppose that \mbox{$f : M \rightarrow M$} be a $C^2$ endomorphism and $\mu$ an
	$f-$invariant Borel probability measure on $M.$ If $\mu \ll m,$ then there holds
	Pesin's formula
	\begin{equation}
		h_{\mu}(f) = \displaystyle\int_M \displaystyle\sum \lambda^i(x)^{+}m_i(x) d\mu.
	\end{equation}
\end{theorem}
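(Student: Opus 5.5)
The plan is to prove the two inequalities separately. Throughout I work on the inverse limit $(M^f,\bar f,\bar\mu)$, where $\bar\mu$ is the unique lift of $\mu$ given by the theorem of \cite{QXZ}. Since $\pi\circ\bar f=f\circ\pi$ and $\pi$ is finite-to-one on nothing but is measure-theoretically well behaved, we have $h_{\bar\mu}(\bar f)=h_\mu(f)$. This should follow from the Cantor fibres of Theorem \ref{Teo-Aoki-Invese-Limit} being determined by the past, so that the lift adds no entropy. The Lyapunov exponents of $\mu$ are the exponents of the cocycle $Df_{x_0}$ along $\bar\mu$-typical orbits $\bar x$, by Oseledets applied to the invertible system $\bar f$.

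\textbf{Upper bound (Ruelle inequality).} I would show $h_\mu(f)\le\int\sum\lambda^i(x)^+m_i(x)\,d\mu$ exactly as in the diffeomorphism case. This inequality does not use $\mu\ll m$ and only needs $f$ to be $C^1$ with bounded derivative.
\begin{enumerate}
\item Take a fine partition of $M$ into small cubes.
\item Bound the number of cubes that $f^n$ of a single cube can meet by the growth of volume along the expanding Oseledets directions, $\exp\big(n\sum\lambda_i^+m_i+o(n)\big)$.
\item Conclude via Shannon--McMillan--Breiman.
\end{enumerate}
Local injectivity is not needed here, only the uniform bound on $\|Df\|$.

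\textbf{Lower bound.} I would follow Mañé's proof of Pesin's formula, adapted to the inverse limit as done by Liu.
\begin{enumerate}
\item Use Pesin theory along $\bar\mu$-typical full orbits $\bar x$ to obtain local unstable manifolds $W^u(\bar x)$. These depend on the chosen past, as in Theorem \ref{Teo-Loc-manifolds}.
\item Construct a measurable partition $\eta$ of $M^f$ subordinate to $W^u$-manifolds, increasing in the sense that $\eta$ refines $\bar f\eta$, with $\bar f^{-1}\eta$ generating. This gives $h_{\bar\mu}(\bar f)\ge h_{\bar\mu}(\bar f,\eta)=H(\eta\,|\,\bar f\eta)$.
\item Use $\mu\ll m$, with $m$ Riemannian volume, via a Fubini argument on foliation boxes of local unstable plaques lifted to $M^f$. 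This shows that the conditional measures $\pi_*\bar\mu_{\eta(\bar x)}$ are absolutely continuous with respect to $m^u_{\pi(\eta(\bar x))}$.
\item Show the conditional densities are given by the ratio formula \eqref{conditionalU}, so that
$$H(\eta\,|\,\bar f\eta)=-\int\log\bar\mu_{\bar f\eta(\bar x)}\big(\eta(\bar x)\big)\,d\bar\mu=\int\log J^uf(x_0)\,d\bar\mu .$$
\item Conclude with Oseledets, since $\int\log J^uf\,d\bar\mu=\int\sum\lambda_i^+m_i\,d\mu$.
\end{enumerate}
Alternatively, one can follow Mañé directly. Choose a partition $\mathcal P$ of $M$ with finite entropy whose elements have diameter bounded by a function $\rho(x)$ adapted to the Pesin charts. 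Then, using the $C^2$ graph transform along pasts, estimate
$$m\big(\mathcal P^n(x)\big)\lesssim \exp\Big(-\sum_{j<n}\log J^uf(f^jx)+\varepsilon n\Big).$$
Since $\mu\ll m$, a density-point argument converts this volume estimate into the same bound for $\mu(\mathcal P^n(x))$.

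\textbf{Main obstacle.} The hard step is the absolute continuity and volume estimate in the non-invertible setting. Unstable manifolds through a point $x$ are not unique, so a naive unstable lamination on $M$ does not exist. Everything must be done on $M^f$, where there is no Riemannian volume and the fibres are Cantor sets.

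To handle this, I would use local injectivity of $f$ (it is a local diffeomorphism). For a fixed finite past $x_{-1},\dots,x_{-N}$, the inverse branches $f^{-N}$ are well defined diffeomorphisms on small balls. This lets me compare $\bar\mu$ restricted to cylinder sets of $M^f$ with $\mu$, and hence with $m$, on $M$ via Jacobians of these branches. The $C^2$ hypothesis enters through bounded distortion of $J^uf$ along local unstable plaques, which guarantees that the infinite product $\Delta^u_f(\bar x,\bar y)$ in \eqref{conditionalU} converges.
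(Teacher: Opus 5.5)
The paper does not prove this statement; it quotes it from Liu \cite{PDL1}. Your proposal therefore has to stand on its own.

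\textbf{What is fine.} The upper bound is correct: Ruelle's inequality holds for any $C^1$ map and needs no injectivity. The equality $h_{\bar\mu}(\bar f)=h_\mu(f)$ is also true, but not for the reason you give. The actual reason is that the $\sigma$-algebras $\bar f^{\,n}\pi^{-1}\mathcal B_M$, $n\ge 0$, record the coordinates $x_{-n}$ and so generate the Borel $\sigma$-algebra of $M^f$. Adjoining finitely many iterates to $\pi^{-1}\xi$ does not change entropy, so taking the supremum gives the equality.

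\textbf{The genuine gap.} It is Step 3 of your main lower-bound route: the claim that $\mu\ll m$ forces $\pi_*\bar\mu_{\eta(\bar x)}\ll m^u_{\pi(\eta(\bar x))}$ ``by a Fubini argument on foliation boxes lifted to $M^f$.'' This is the entire difficulty of that route, and the argument you indicate does not work.
\begin{enumerate}
\item Even for diffeomorphisms, this step is Pesin's absolute continuity theorem for the unstable lamination, not a bare Fubini argument.
\item For endomorphisms there is no lamination to apply it to. Let $\bar y$ range over any neighbourhood of $\bar x$ in $M^f$, including any finite-depth cylinder $\{y_{-k}\in g_k(V),\ 0\le k\le N\}$. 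The plaques $\pi(W^u_{loc}(\bar y))$ do not foliate a region of $M$: two pasts that agree to depth $N$ and differ later in general give distinct unstable plaques through the same point $y_0$.
\item $M^f$, whose fibres are Cantor sets, carries no reference volume in which to do Fubini instead.
\item Your inverse-branch remedy only yields $\pi_*(\bar\mu|_C)\ll m$ for such cylinders $C$. That says nothing about the conditionals on atoms of $\eta$, which are determined by the entire infinite past.
\end{enumerate}
The paper itself runs this implication in the opposite direction. In Step 5 of the proof of Theorem \ref{TeoA}, smooth invariant measures are shown to be SRB by first applying Theorem \ref{pesin1} and then Theorem \ref{pesin2}. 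So your Steps 3--5 amount to the SRB $\Rightarrow$ entropy-formula half of Theorem \ref{pesin2}, preceded by an absolute-continuity theorem on $M^f$ that you have not supplied.

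\textbf{What to do instead.} Your alternative route is the one that works, and it should be your main line. It rests on Mañé's volume lemma:
$m(\mathcal P^n(x))\le C(x)\exp\bigl(-\sum_{j<n}\log J^uf(f^jx)+\varepsilon n\bigr)$,
for a finite-entropy partition with $\operatorname{diam}\mathcal P(x)\le\rho(x)$ and $\log\rho\in L^1(\mu)$. This only pushes forward small balls on which $f$ is injective, so the non-uniqueness of unstable manifolds never enters. A typical past is used only to fix the splitting along the forward orbit.

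Make the density step the Borel--Cantelli argument. The atoms $P\in\mathcal P^n$ with $\mu(P)>e^{n\delta}m(P)$ have total $m$-measure $<e^{-n\delta}$. Since $\mu\ll m$, for $\mu$-a.e.\ $x$ one eventually has $\mu(\mathcal P^n(x))\le e^{n\delta}m(\mathcal P^n(x))$. Shannon--McMillan--Breiman for countable finite-entropy partitions then gives the lower bound.

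\textbf{Scope.} ``It is a local diffeomorphism'' is not a hypothesis of the statement. As the integrability assumption in Theorem \ref{pesin2} signals, a $C^2$ endomorphism here may have critical points. In general you must derive two facts from $\mu\ll m$:
\begin{enumerate}
\item $\mu(C_f)=0$, which follows from Sard's theorem and invariance.
\item $\log|\det Df|\in L^1(\mu)$. This is needed for Pesin theory on $M^f$, and for choosing $\rho$ with $\log\rho\in L^1$ so that the balls stay away from $C_f$.
\end{enumerate}
For the paper's own applications, which are Anosov endomorphisms, your restriction is harmless.
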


\begin{theorem}[\cite{QZ}] \label{pesin2} Let $f$ be a $C^2$ endomorphism on $M$ with an invariant Borel probability
	measure $\mu$ such that $\log(|Jf(x)|) \in L^1(M,\mu).$ Then the entropy
	formula
	\begin{equation}\label{PesinU}
		h_{\mu}(f) = \displaystyle\int_M \displaystyle\sum \lambda^i(x)^{+}m_i(x) d\mu.
	\end{equation}
	holds if and only if $\mu$ has SRB property.
\end{theorem}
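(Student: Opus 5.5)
The plan is to follow the Ledrappier–Strelcyn–Young scheme, carried out on the inverse limit space $M^f$, where the dynamics becomes invertible and unstable manifolds are well defined along orbits. First I will lift $\mu$ to the unique $\bar f$-invariant measure $\bar\mu$ with $\pi_*\bar\mu=\mu$. Since $h_{\bar\mu}(\bar f)=h_\mu(f)$, all entropy computations can be done on $M^f$. The hypothesis $\log|Jf|\in L^1(\mu)$ will be used to apply the Oseledets theorem along $\bar\mu$-typical orbits $\bar x$. This gives the splitting and the unstable manifolds $W^u(\bar x)$ (Theorem \ref{Teo-Loc-manifolds} in the uniform case, Pesin theory in general). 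It also gives the identity
$$\int \log J^uf(x_0)\,d\bar\mu(\bar x)=\int \sum\lambda^i(x)^+m_i(x)\,d\mu,$$
where $J^uf$ is the Jacobian along $E^u(\bar x)$.

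Next, I will take an increasing measurable partition $\eta$ subordinate to $W^u$-manifolds, as provided by Proposition 3.2 of \cite{QZ}. The goal is to show $h_\mu(f)=H_{\bar\mu}(\eta\mid\bar f\eta)$. The inequality $\le$ comes from choosing $\eta$ so that $\bigvee_{n\ge0}\bar f^{-n}\eta$ is the point partition (mod $0$) and $\bar f^{\,n}\eta$ decreases to a partition below the Pinsker $\sigma$-algebra. The inequality $\ge$ follows from the general relation $h(\bar f,\eta)\le h(\bar f)$. I expect this step to be the main obstacle. It needs a Mañé/Ledrappier–Strelcyn type construction: start from a partition $\mathcal P$ of $M$ with $\int -\log\mu(B_\rho(x))\,d\mu<\infty$, refine it along backward orbits on $M^f$, and intersect with local unstable plaques. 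One must check that the resulting partition is generating for the unstable direction, and handle the fact that $\pi$ is not injective on fibers. I would try to adapt Ledrappier–Young's Lemma that unstable subordinate partitions capture all entropy, using the fibered structure of $M^f$ (Theorem \ref{Teo-Aoki-Invese-Limit}).

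With $\eta$ in hand, I will use the $C^2$ hypothesis. Via bounded distortion along unstable plaques, the infinite product $\Delta^u_f(\bar x,\bar y)$ converges and is positive and continuous on each atom. This lets me define a reference family of conditional measures $\rho^u_f\,dm^u$ as in (\ref{conditionalU}).

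Now both directions reduce to one computation. Write $g(\bar x)=\bar\mu_{\eta(\bar x)}\big((\bar f\eta)(\bar x)\cap\cdot\big)$ at $\bar x$, so that $H(\eta\mid\bar f\eta)=-\int\log g\,d\bar\mu$. Let $\tilde g$ be the same quantity computed with the reference densities $\rho^u_f$. A telescoping argument with $\Delta^u_f$ shows $-\int\log\tilde g\,d\bar\mu=\int\log J^uf\,d\bar\mu$. Jensen's inequality then gives
$$H(\eta\mid\bar f\eta)-\int\log J^uf\,d\bar\mu=\int\log\frac{\tilde g}{g}\,d\bar\mu\le\log\int\frac{\tilde g}{g}\,d\bar\mu\le 0.$$
This yields the Ruelle inequality. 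For the direction SRB $\Rightarrow$ Pesin: the SRB property means $\pi_*\bar\mu_{\eta(\bar x)}=\rho^u_f\,dm^u$. Then $g=\tilde g$, equality holds, and combining with the first paragraph gives (\ref{PesinU}). For the converse: equality in (\ref{PesinU}) forces equality in Jensen, so $g=\tilde g$ almost everywhere. The same holds for $\bar f^{-n}\eta$ for every $n$, and letting $n\to\infty$ identifies $\bar\mu_{\eta(\bar x)}$ with the measure having density $\rho^u_f$. Hence $\pi_*\bar\mu_{\eta(\bar x)}\ll m^u_{\pi(\eta(\bar x))}$, which is the SRB property. Independence of the choice of subordinate partition follows by comparing two such partitions through their common refinement.
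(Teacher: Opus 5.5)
The paper gives no proof of this statement; it quotes it from Qian--Zhu \cite{QZ}. Their argument is the Ledrappier--Strelcyn/Ledrappier--Young scheme carried over to $M^f$, which is the architecture you outline: an increasing subordinate partition, $h_\mu(f)=H(\eta\mid\bar f\eta)$, reference measures built from $\Delta^u_f$, and Jensen. Your Pesin $\Rightarrow$ SRB half is the Ledrappier--Young argument. It is sound modulo the step $h_\mu(f)\le H(\eta\mid\bar f\eta)$, which you correctly flag as the hard part. (Your definition of $g$ is garbled; it should be $g(\bar x)=\bar\mu_{(\bar f\eta)(\bar x)}(\eta(\bar x))$, the conditional probability of the small atom inside the large one.)

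The genuine gap is in SRB $\Rightarrow$ Pesin. The SRB property only says $\pi_*\bar\mu_{\eta(\bar x)}=\rho_{\bar x}\,dm^u$ for \emph{some} density $\rho_{\bar x}$. It does not say $\rho_{\bar x}=\Delta^u_f(\bar x,\cdot)/L(\bar x)$. That identification is formula (\ref{conditionalU}) as recalled in the paper, and it is a theorem. In the Ledrappier--Young scheme you are following, it comes out of the equality case of your Jensen argument, i.e. after the entropy formula is known. So asserting $g=\tilde g$ here is circular, unless you first prove the density formula directly from absolute continuity and invariance, which needs a separate Lebesgue-density argument.

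The standard fix (Ledrappier) avoids identifying the density altogether. Since $\eta(\bar x)\subset\bar f\eta(\bar x)$ and $\bar\mu_{\bar f\eta(\bar x)}=\bar f_*\bar\mu_{\eta(\bar f^{-1}\bar x)}$, one gets
\[
g(\bar x)=\frac{\rho_{\bar f^{-1}\bar x}(x_{-1})}{J^uf(x_{-1})\,\rho_{\bar x}(x_0)},
\qquad\text{i.e.}\qquad
-\log g(\bar x)=\log J^uf(x_{-1})+\phi(\bar x)-\phi(\bar f^{-1}\bar x),
\]
with $\phi(\bar x)=\log\rho_{\bar x}(x_0)$, where the versions of the densities are chosen at Lebesgue density points. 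Because $-\log g\ge 0$ and $\log J^uf$ is integrable, you can apply the lemma that a coboundary bounded below by an integrable function is integrable with zero integral. This gives $H(\eta\mid\bar f\eta)=\int\log J^uf\,d\bar\mu$. This direction then needs only the \emph{easy} inequality $h_\mu(f)\ge H(\eta\mid\bar f\eta)$ together with Ruelle's inequality, not the hard step.

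The same coboundary lemma is also missing from your ``telescoping'' computation of $-\int\log\tilde g$. There
\[
-\log\tilde g(\bar x)=\log J^uf(x_{-1})+\log L(\bar f^{-1}\bar x)-\log L(\bar x),
\]
and $\log L$ need not be integrable, because the atoms of $\eta$ have no uniform lower bound on their size. So the coboundary term cannot simply be integrated away; it vanishes by that lemma, using $\tilde g\le 1$.
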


A set $\Omega_k$ will be called a \textit{basic set} for $f$ if there exists a neighborhood $U$ of such that  $\Omega_k=\bigcap_{n\in\Z}f^n(\Omega_k)$.

A basic set $\Omega_k$ is a \textit{attractor} of $f$ if it has small neighborhood $U$ with $f(U)\subset U$. Similarly, $\Omega_k$ is a \textit{repeller} if it has small neighborhood $U$ with $\overline{U}\subset f(U)$.

For $C^2$ attractors there is a unique measure satisfying the Pesin entropy formula.

\begin{theorem}[Corollary 1.1.2 of \cite{QZ}]
	Let $\Lambda$ be an Axiom A attractor of $f \in C^2(O, M)$ and assume
	that $T_xf$ is nondegenerate for every $x \in \Lambda.$ Then there exists a unique $f-$invariant
	Borel probability measure $\mu$ on $\Lambda$ which is characterized by each of the following
	properties: \begin{enumerate}
		\item $\mu$ has the SRB property.
		\item The system $f : (\Lambda, \mu) \rightarrow (M, \mu)$ satisfies the entropy formula.
		\item When $\varepsilon > 0$ is small enough, $\frac{1}{n} \displaystyle\sum_{i=0}^{n-1} \sigma_{f^k(x)} $ converges to $\mu$ as $n \rightarrow +\infty$ for
		Lebesgue almost every $x \in B_{\varepsilon}(\Lambda) = \{ y \in M|\; d(y, \Lambda) < \varepsilon\}.$
	\end{enumerate}
\end{theorem}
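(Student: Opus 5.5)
This statement is quoted from \cite{QZ} (Corollary 1.1.2) and is not proved in the paper itself, so the plan below reconstructs the standard route via the inverse limit.

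I would lift everything to the inverse limit space $\Lambda^f$ with the shift $\bar f$, which is a homeomorphism. There $\Lambda^f$ is a hyperbolic attractor for $\bar f$ in the sense of Axiom A: local product structure, Markov partitions and symbolic coding go through as for diffeomorphisms, using the stable and unstable manifold theory of Theorem \ref{Teo-Loc-manifolds} and the continuity of Theorem \ref{PRZ-1}. The candidate measure $\bar\mu$ is the equilibrium state of $\bar f$ for the Hölder potential $\bar\phi(\bar x)=-\log J^uf(\bar x)$, where the unstable Jacobian depends on the whole past orbit. Since $f$ is $C^2$ and the unstable bundles depend Hölder-continuously on $\bar x$, the potential $\bar\phi$ is Hölder on $\Lambda^f$. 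Via the Markov coding, the transitive (hence, on the mixing pieces of Theorem \ref{Teo-Dec-Spec}, mixing) subshift has a unique equilibrium state. I then set $\mu=\pi_*\bar\mu$.

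\textbf{Step 1: $P(\bar\phi)=0$.} I would prove this with volume estimates. Because $\Lambda$ is an attractor, local unstable manifolds lie in $\Lambda$, and bounded distortion (from the $C^2$ hypothesis, via the product formula $\Delta^u_f$) gives
\[
\sum_{\text{Bowen balls}} \exp S_n\bar\phi \asymp m^u(\text{unstable disc}),
\]
uniformly in $n$. Hence the pressure vanishes.

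\textbf{Step 2: equivalence of (1) and (2).} By the variational principle, $h_\mu(f)-\int\log J^uf\,d\mu\le 0$, with equality exactly for equilibrium states. The integral equals the sum of positive exponents, and $h_\mu(f)=h_{\bar\mu}(\bar f)$. Theorem \ref{pesin2} then gives that the entropy formula is equivalent to the SRB property. So (1) and (2) are each equivalent to $\bar\mu$ being the equilibrium state of $\bar\phi$. Uniqueness follows from uniqueness of equilibrium states, after reducing to the mixing components; note that a transitive basic set would be needed, or one uses the basic set structure. I would also check explicitly, using \eqref{conditionalU}, that the equilibrium state has conditional densities $\Delta^u_f/L$.

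\textbf{Step 3: property (3).} Take $x$ Lebesgue-typical in $B_\varepsilon(\Lambda)$. By absolute continuity of the stable foliation of $\Lambda$ (for an attractor, stable manifolds fill a neighbourhood), Lebesgue-almost every point lies on the stable manifold of a point that is generic for $\mu$ on an unstable disc. Forward Birkhoff averages are constant along stable leaves, and $\mu$-generic points have full $m^u$ measure on unstable discs by the SRB property. Combining these with Fubini gives convergence for Lebesgue-almost every $x$. Conversely, a measure satisfying (3) must coincide with $\mu$.

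\textbf{Main obstacle.} I expect the main obstacle to be the absolute continuity of the stable lamination in the endomorphism setting. Stable manifolds are well defined in $M$, but unstable discs depend on the choice of past orbit, so the holonomy argument must be carried out on $\Lambda^f$ and then projected. The Jacobian bounds would come from the $C^2$ distortion estimates together with Theorem \ref{PRZ-1}.
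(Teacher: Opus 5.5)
The paper does not prove this statement. It is quoted from \cite{QZ} and used as a black box, so there is no internal argument to compare yours with. Your reconstruction is the standard route to this characterization for Axiom A endomorphisms, and it is correct as an outline. Its main steps are:
\begin{enumerate}
\item take the equilibrium state of $\bar\phi=-\log J^uf$ for $\bar f$ on $\Lambda^f$, obtained via Markov coding;
\item show $P(\bar\phi)=0$, because an attractor contains its local unstable discs;
\item use the variational principle together with Theorem~\ref{pesin2} to identify (1) and (2) with that single equilibrium state;
\item prove (3) by an absolute-continuity argument.
\end{enumerate}

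Three points need tightening.

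\textbf{Step 3 (covering $B_\varepsilon(\Lambda)$).} The SRB property only makes $m^u$-almost every point generic on the atoms $\pi(\eta(\bar x))$ for $\bar\mu$-typical $\bar x$. These atoms have non-uniform sizes, so stable saturation gives full Lebesgue measure only in small, non-uniform neighbourhoods of $\mu$-typical points. To reach Lebesgue-almost every point of $B_\varepsilon(\Lambda)$ you need four further ingredients:
\begin{enumerate}
\item Push the atoms forward. Genericity is $f$-invariant, and $f^n$ expands an atom to a disc containing $W^u_\delta(\bar f^{\,n}\bar x)$ of uniform size.
\item Use that $\bar\mu$ is ergodic with full support, by the Gibbs property. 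Hence a typical forward $\bar f$-orbit is dense in $\Lambda^f$, and such discs pass near every point of $\Lambda$.
\item Use that $\bigcup_{z\in W^u_\delta(\bar w)}W^s_\delta(z)$ contains a neighbourhood of $w_0$ of size uniform in $\bar w\in\Lambda^f$.
\item Conclude with compactness and $B_\varepsilon(\Lambda)\subset\bigcup_{y\in\Lambda}W^s(y)$.
\end{enumerate}
Alternatively, Bowen's large-deviation argument from the volume lemma avoids absolute continuity altogether.

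\textbf{The obstacle you single out is not really one.} Local stable manifolds depend only on the base point. Forward images $f^n(D)$ of transversal discs live in $M$ and involve no choice of past. So absolute continuity of the stable lamination for a map that is a $C^2$ local diffeomorphism near $\Lambda$ goes through essentially as in the invertible case, working with local inverse branches along orbits. The inverse limit is genuinely needed only for the unstable data: $\bar\phi$, the symbolic coding, the distortion estimate in Step~1, and the densities $\Delta^u_f/L$.

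\textbf{Step 1 (reduction to finitely many discs).} You should say explicitly how Bowen-ball sums on one unstable disc control the pressure on all of $\Lambda^f$. Local product structure on $\Lambda^f$ lets finitely many unstable discs control $(n,\varepsilon)$-separated sets of the whole space, because points on a common local stable set stay $\varepsilon$-close under all forward iterates.
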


The above result can be applied to $C^2$ transitive Anosov endomorphisms, particularly it holds for all Anosov endomorphisms of the torus, see \cite{AH94}.

There are analogous formulations concerning subordinate partition with respect to stable manifolds, which can be taken decreasing, that means $\bar{f}^{-1} (\eta) \preceq \eta,$ see \cite{PDL2}, Proposition 4.1.1. In the sense of hyperbolic repellers, including Anosov endomorphisms, there is an important result concerning inverse SRB measures.

\begin{theorem}[Theorem 3 of \cite{M-2010} and Theorems 2.3 and 2.6 of \cite{PDL2}]\label{Teo-M10}
	Let $\Lambda$ be a connected hyperbolic repeller for a smooth endomorphism $f : M \to M$. Assume that $f$ is $d$ to one on $\Lambda$. Then there exists a unique
	$f$-invariant probability measure $\mu^-$ on $\Lambda$ satisfying the inverse Pesin entropy formula:	
	\begin{equation}\label{PesinS}
		h_{\mu^-}(f) = \log d -\int_{M} \displaystyle\sum \lambda^i(x)^{-}m_i(x) d\mu^-.
	\end{equation}
	
	In addition, the measure $\mu^-$ has absolutely continuous conditional measures on local stable manifolds.
\end{theorem}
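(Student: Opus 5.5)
The plan is to obtain $\mu^-$ as an equilibrium state of a Hölder potential built from the stable Jacobian, working on the inverse limit $\Lambda^f$ with $\bar f^{-1}$ as the main tool. The stable direction $E^s(x_0)$ depends only on the forward orbit of $x_0$, so it is a genuine continuous (indeed Hölder, since $f$ is $C^2$) distribution on $\Lambda$. Hence
$$\phi(x)=\log\left|\det Df_x|_{E^s(x)}\right|$$
is a Hölder function on $\Lambda$ itself, and $\int \phi\, d\mu=\int\sum\lambda^i(x)^- m_i(x)\,d\mu$ for every invariant $\mu$. The proof then has three steps: an upper bound valid for every invariant measure, existence and uniqueness of a measure attaining it, and absolute continuity along stable leaves.

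\textbf{Step 1 (upper bound).} I would show that every $f$-invariant $\mu$ on $\Lambda$ satisfies
$$h_\mu(f)\le \log d-\int\phi\,d\mu .$$
Since $h_\mu(f)=h_{\bar\mu}(\bar f)=h_{\bar\mu}(\bar f^{-1})$, this is a Ruelle-type inequality for the inverse dynamics. For $\bar f^{-1}$ the positive exponents are the $-\lambda^{i-}$, and the non-injectivity of $f$ adds at most the folding entropy $F_\mu(f)\le\log d$ (this is the Liu-type inequality $h_\mu\le F_\mu-\int\sum\lambda^-$). Rewritten, the bound says
$$P_{\mathrm{top}}(f|_\Lambda,\phi)\le\log d .$$

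\textbf{Step 2 (pressure and uniqueness).} I would prove $P(f|_\Lambda,\phi)=\log d$ and then apply thermodynamic formalism. A connected basic set of an Axiom A endomorphism is transitive (Theorem \ref{Teo-Dec-Spec}, with connectedness ruling out cyclic pieces), so $f|_\Lambda$ is a transitive expansive map with specification. It therefore has a unique equilibrium state $\mu^-$ for the Hölder potential $\phi$, with the Gibbs property on Bowen balls. Uniqueness in the theorem then follows from Step 1: any measure satisfying \eqref{PesinS} attains the supremum in the variational principle and so is an equilibrium state.

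To get the pressure, I would count backward branches. Since $\Lambda$ is a repeller on which $f$ is $d$-to-one, for a point $x$ and a small stable disc $D=W^s_\delta(x)$ the set $f^{-n}(D)\cap U$ consists of $d^n$ pieces, each contained in a local stable disc. Bounded distortion along stable leaves, available because $f$ is $C^2$, gives
$$\sum_{f^n y=x} e^{S_n\phi(y)}\asymp d^n\cdot\frac{\text{(stable volume of the pieces)}}{m^s(D)} .$$
A local product structure and volume argument should show that this quotient is neither exponentially small nor large. From the transfer-operator characterization of pressure one then gets $P=\log d$.

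\textbf{Step 3 (absolute continuity).} I would use a decreasing partition $\eta$ subordinate to stable leaves, as in \cite{PDL2}, Proposition 4.1.1. The Gibbs property of $\mu^-$, combined with the same distortion estimates, gives conditional densities of the form
$$\prod_{k\ge0}\frac{J^sf(f^k x)}{J^sf(f^k y)} ,$$
which is the stable analogue of \eqref{conditionalU}. Alternatively, following the Ledrappier–Young/Qian–Zhang argument for $\bar f^{-1}$, equality in \eqref{PesinS} forces absolutely continuous conditionals on stable manifolds, mirroring Theorem \ref{pesin2}.

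\textbf{Main obstacle.} I expect the hard part to be Step 2's identification $P(\phi)=\log d$, specifically the claim that the total stable volume of the $d^n$ preimage pieces is comparable to $m^s(D)$ up to subexponential factors. I would try first to use the repeller neighborhood $\overline U\subset f(U)$ together with a Fubini/product-structure decomposition of $m$ on $U$ into stable and unstable discs.
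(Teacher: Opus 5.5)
The paper gives no proof of this statement; it imports it from Mihailescu and Liu. Your architecture is sound and consistent with those sources:
\begin{itemize}
\item realize $\mu^-$ as the equilibrium state of $\phi=\log|\det Df|_{E^s}|$;
\item bound $h_\mu(f)\le F_\mu(f)-\int\sum\lambda_i^-m_i\,d\mu\le\log d-\int\phi\,d\mu$ via Liu's Ruelle-type inequality;
\item obtain absolutely continuous stable conditionals from Liu's characterization of equality (equality with $\log d$ forces $F_\mu(f)=\log d$).
\end{itemize}
But everything hinges on the lower bound $P(f|_\Lambda,\phi)\ge\log d$, and the argument you propose for it fails. If $E^s\neq0$, hyperbolicity gives $e^{S_n\phi(y)}\le C'\lambda^{-n\dim E^s}$ for every $y$, hence
\[
\sum_{f^ny=x}e^{S_n\phi(y)}\le C'\,d^n\lambda^{-n\dim E^s}.
\]
Its exponential growth rate is strictly below $\log d$, so this sum is never $\asymp d^n$ up to subexponential factors. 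Two checks confirm this:
\begin{itemize}
\item For an Anosov diffeomorphism the sum is $e^{S_n\phi(f^{-n}x)}\to0$, while $P(\phi)=0$.
\item For $h\times A$ from the paper's Remark it equals $2^n\lambda_A^{-n}$, with $\lambda_A>1$ the expanding eigenvalue of $A$, while $P(\phi)=\log 2$.
\end{itemize}
The identity $P(\phi)=\lim\frac1n\log\sum_{f^ny=x}e^{S_n\phi(y)}$ is an expanding-map fact. Here $f^{-n}(x)$ is $(n,\epsilon)$-separated but far from spanning, because Bowen balls have full width $\epsilon$ along $E^s$.

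Likewise, the components of $f^{-n}(W^s_\delta(x))\cap U$ are not contained in local stable discs. Backward branches stretch $E^s$ by at least $C^{-1}\lambda^n$, so the pieces are long stable plaques with no bounded distortion. The quotient in your display is therefore exponentially large, not subexponential, and it is governed by $e^{-S_n\phi}$ rather than $e^{S_n\phi}$.

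What does work is a volume lemma for forward images of Bowen balls. For $w\in\Lambda$, the Bowen ball $B_n(w,\epsilon)\subset M$ has width $\epsilon$ along $E^s$ and volume comparable to $e^{-S_n\phi^u(w)}$, where $\phi^u$ is the unstable log-Jacobian along any prehistory. Since $\log Jf=\phi^u+\phi+{}$a bounded coboundary,
\[
m\big(f^n(B_n(w,\epsilon))\big)\asymp Jf^n(w)\,e^{-S_n\phi^u(w)}\asymp e^{S_n\phi(w)}.
\]
A point $z$ lies in $f^n(B_n(w,\epsilon))$ if and only if one of its $n$-step backward branches $\epsilon$-shadows the orbit segment of $w$. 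Since $\Lambda$ is a repeller (backward branches starting near $\Lambda$ stay near $\Lambda$) on which $f$ is $d$-to-one, each $z$ in a fixed neighborhood $V$ of $\Lambda$ has exactly $d^n$ such branches. By shadowing, each branch lies in at least one and at most boundedly many balls $B_n(w,\epsilon)$, $w\in E_n$, for a maximal $(n,\epsilon)$-separated set $E_n\subset\Lambda$. Integrating this multiplicity over $V$ gives $\sum_{w\in E_n}e^{S_n\phi(w)}\asymp d^n$, i.e.\ $P(\phi)=\log d$, with both inequalities at once.

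One smaller correction: $f|_\Lambda$ is not (positively) expansive when $E^s\neq0$. Bowen's uniqueness theorem should instead be applied to $\bar f$ on $\Lambda^f$ with potential $\phi\circ\pi$, and the result pushed down through the entropy- and integral-preserving bijection $\mu\leftrightarrow\bar\mu$.
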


\begin{remark}\label{remark-mihailescu}
About the condition that $\Lambda$ is connected,	in pag. 803 of \cite{M-2010} the the author highlights: ``We give the results in this setting (i.e. when $\Lambda$ is connected), but in fact all we need is that $d(\cdot)$ is constant.'' In our results, constant degree is guaranteed by the completely invariant assumption.
\end{remark}

In the setting of the previous Theorem, if $(f, \mu)$ satisfies the Stable Pesin Formula $(\ref{PesinS}),$ then for a given subordinate partition $\eta,$ with respect to stable manifolds, we have $$ \mu_{\eta (x)} \ll m^s_{\eta (x)},$$ for $\mu-$ a.e $x \in M.$ Moreover
\begin{equation}\label{conditionalS}
	\rho^s_f(x) = \frac{\Delta^s_f(x,y)}{\int_{\eta(x)} \Delta^s_f(x,y)dm^s_{\eta (x)} }, \; \forall y \in \eta(x).
\end{equation}

Here $\Delta^s_f(x,y) = \prod_{k = 0}^{\infty} \frac{Jf(f^k(x))}{Jf(f^k(y))}\cdot \frac{J^sf(f^k(x))}{J^sf(f^k(y))}.$
See \cite{PDL2} as a reference.

The theorems on Pesin formulas are true in our setting since every tori Anosov endomorphism is transitive, see \cite{AH94}.

The next lemma whose proof is essentially the same as Corollary 4.4 of \cite{Llave92}, up to minor adjustments using local inverses.

\begin{lemma} For a $C^{k}, k \geq 2,$ Anosov endomorphism, the conditional measures of stable and unstable SRB measures restricted to stable and unstable leaves respectively are $C^{k-1}.$ In particular, if $f$ is smooth, then the conditional measures are smooth.
\end{lemma}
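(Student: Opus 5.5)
The plan is to show that the densities are given by the explicit infinite products in (\ref{conditionalU}) and (\ref{conditionalS}), and then to prove that these products converge in $C^{k-1}$ along leaves, following Corollary 4.4 of \cite{Llave92}.

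\textbf{Unstable case.} Fix a subordinate partition $\eta$ and an atom $\eta(\bar x)$. The density is $\rho^u_f(\bar y)=\Delta^u_f(\bar x,\bar y)/L(\bar x)$, and $L(\bar x)$ is a constant along the atom. So it suffices to show that
$$\log\Delta^u_f(\bar x,\bar y)=\sum_{k\ge1}\bigl(\log J^uf(x_{-k})-\log J^uf(y_{-k})\bigr)$$
is $C^{k-1}$ in $y_0\in\pi(\eta(\bar x))$. Here I use that $\pi$ restricted to the atom is a bijection onto a piece of the local unstable manifold $W^u(\bar x)$.

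\begin{itemize}
\item \textbf{Local inverse branches.} The map $y_0\mapsto y_{-k}$ is given by composing local inverse branches $g_k=f^{-1}_{x_{-k}}\circ\cdots\circ f^{-1}_{x_{-1}}$ of $f$ along the past orbit $\bar x$. These exist because $f$ is a local diffeomorphism, and they map $W^u_\delta(\bar f^{-k}\bar x)$ into itself with uniform contraction $C\lambda^{-k}$.
\item \textbf{Regularity of the summand.} The leaves $W^u(\bar f^{-k}\bar x)$ are $C^k$ embedded discs, with uniform bounds coming from Theorem~\ref{Teo-Loc-manifolds} and compactness of $M^f$. Also $J^uf$ restricted to a leaf, i.e.\ $|\det Df|_{T W^u}|$, is $C^{k-1}$ along that leaf, since the tangent field of a $C^k$ leaf is $C^{k-1}$. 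Hence each term $\phi\circ g_k$, with $\phi=\log J^uf$ restricted to the leaf, is $C^{k-1}$.
\item \textbf{Derivative bounds.} By the chain rule and Faà di Bruno, the $j$-th derivative of $\phi\circ g_k$ is bounded by $C_j\lambda^{-k}$ for $j\le k-1$. This uses that $g_k$ contracts exponentially and that its higher derivatives are controlled. The standard estimate is $\|D^j g_k\|\le C_j\lambda^{-k}$, proved by induction on $j$ through the recursion $g_{k+1}=f^{-1}_{x_{-k-1}}\circ g_k$.
\item \textbf{Conclusion.} The series therefore converges in $C^{k-1}$, so $\rho^u_f$ is $C^{k-1}$ along unstable leaves.
\end{itemize}

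\textbf{Stable case.} Use (\ref{conditionalS}). Here $y\mapsto f^k(y)$ restricted to a stable leaf is a genuine forward iterate and contracts at rate $C\lambda^{-k}$. Stable leaves do not depend on the choice of past orbit, so no inverse branches are needed.
\begin{itemize}
\item The summand now involves both $\log Jf$ and $\log J^sf$. Since $f\in C^k$, $\log Jf$ is $C^{k-1}$ on $M$. Since the stable leaves are $C^k$ and $E^s$ is tangent to them, $\log J^sf$ restricted to a leaf is $C^{k-1}$.
\item The same derivative estimates as in the unstable case give $C^{k-1}$ convergence of $\sum_k\bigl(\psi(f^kx)-\psi(f^ky)\bigr)$, where $\psi=\log Jf+\log J^sf$.
\end{itemize}

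\textbf{Main obstacle.} The delicate step is the uniform higher-derivative bound $\|D^j g_k\|\le C_j\lambda^{-k}$ for $j\le k-1$, together with the uniform $C^k$ regularity of the individual leaves. The leaves must be $C^k$ individually, even though the foliation is only Hölder transversally, and only regularity along the leaf is needed. For the leaf regularity I would invoke the $C^k$ version of the stable/unstable manifold theorem in the inverse limit, with constants uniform in $\bar x$ by Theorem~\ref{PRZ-1} and compactness of $M^f$. For the derivative bound I would reproduce de la Llave's inductive estimate verbatim, replacing $f^{-1}$ by the local inverse branches along $\bar x$; since these branches have uniformly bounded $C^k$ norms on $\delta$-balls, the argument goes through unchanged.

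For $f$ smooth, the conclusion holds for every $k$, so the conditional densities are smooth.
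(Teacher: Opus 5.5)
Your proposal is correct and follows the paper's route: the paper only says the proof is de la Llave's Corollary 4.4 ``up to minor adjustments using local inverses,'' and you carry out that adaptation (the explicit product formulas for the densities, inverse branches along $\bar x$, and exponentially decaying leafwise $C^{k-1}$ bounds on the summands). One small correction: the uniform $C^k$ bounds on the leaves come from the $C^k$ Hadamard--Perron theorem for sequences of maps (or from lifting to $\widetilde{M}$, where $\tilde f$ is an Anosov diffeomorphism), not from Theorem~\ref{PRZ-1}, which only gives $C^1$ continuity.
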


\section{Proof of theorems \ref{TeoA} and \ref{TeoC} }

\begin{proof}[Proof of Theorem \ref{TeoA}]
	
We will done the proof in five steps. \vspace{0.1cm}

\textit{  \textbf{Step 1.} $f$ has a inverse SRB  measure $\nu$.}\vspace{0.1cm}

 Since $f$ is Anosov  by Proposition \ref{PropAA}, $\Omega(f)=\overline{Per(f)}$, $f$ is Axiom $A$. By Remark \ref{remk-reppeler}, $f$ has a repeller $\Omega_k(f)$. Theorem \ref{Teo-M10} and Remark \ref{remark-mihailescu},  assert that $f$ has a unique measure inverse SRB $\nu$ in $\Omega_k(f)$, satisfying

\begin{equation}\label{Eq1}
 h_{\nu}(f) = \log d -\int_{M} \displaystyle\Lambda^s_f(x) d\nu.
 \end{equation}\vspace{0.1cm}

% By Theorem \ref{Teo-Dec-Spec},
% we have $\Omega(f)=\bigcup_i\Omega_i(f)$ is a finite decomposition into basic sets. By Proposition \ref{pro1}, $M=\bigcup_iW^s(\Omega_i(f))$, so there is a basic set $\Omega_k(f)$ such that $m(W^s(\Omega_i(f))>0$, then by Theorem \ref{Teo-UW04}, $\Omega_k(f)$ is an attractor and $f:M\to M$ have a SRB measure $\nu$ with $\nu(\Omega_k(f))=1$. By Theorem \ref{pesin2} this is equivalent to $\nu$ satisfies the entropy formula
% \begin{equation}\label{Eq1}
% 	h_{\nu}(f)=\displaystyle\int_{M}\Lambda^u_f(x)d\nu.
% \end{equation}\vspace{0.1cm}
%

 \textbf{Step 2.} \textit{The  measure $\nu$ is also  SRB.}
 \vspace{0.2cm}

 Consider $R$ the set of all simultaneously regular and recurrent points of $f$ with $\nu(R)=1$. Using the Anosov Closing Lemma and the hypothesis that $Jf^n(p)=d^n$, for all $p\in M$ with $f^n(p)=p$, we obtain for all $x\in R$
$$
\Lambda^u_f(x) + \Lambda^s_f(x)=\log d.
$$

Indeed, given $\varepsilon>0$ and $x\in R$, since $f$ is $C^1$ there is $\delta>0$ such that if $d(x,y)\leq \delta$ then $1-\varepsilon<\frac{Jf(x)}{Jf(y)}<1+\varepsilon$.
Since $x$ is recurrent there is a sequence $n_k$ with $d(x,f^{n_k}(x))\leq \delta'$ for some $\delta'>0$ such that all $\delta'-$pseudo orbit is $\delta$ \mbox{shadowed} by a periodic orbit of a point $p_k$ with $f^{n_k}(p_k)=p_k$ and \mbox{$d(f^i(x),f^i(p_k))\leq\delta$},\,\,
\mbox{$i=0,1,\ldots, n_k-1.$} Since $Jf^{n_k}(p_k)=d^{n_k}$,
$$
\frac{Jf^{n_k}(x)}{d^{n_k}}=\frac{Jf^{n_k}(x)}{Jf^{n_k}(p_k)}
=\frac{\prod_{j=1}^{n_k-1} Jf(f^j(x))}{\prod_{j=1}^{n_k-1} Jf(f^j(p_k))}
$$

$$
(1-\varepsilon)^{n_k}<\frac{Jf^{n_k}(x)}{Jf^{n_k}(p_k)}<(1+\varepsilon)^{n_k}
$$

$$
(1-\varepsilon)^{n_k}d^{n_k}<Jf^{n_k}(x)<(1+\varepsilon)^{n_k}d^{n_k}.
$$
Applying $\frac{1}{n_k}\log$ and doing $k\to \infty$ we get
$$
\log(1-\varepsilon)+\log d\leq \Lambda^u_f(x) + \Lambda^s_f(x)  \leq \log(1+\varepsilon)+\log d.
$$
Finally taking $\varepsilon$ going to zero, we obtain $\Lambda^u_f(x) + \Lambda^s_f(x)=\log d$, applying to the equation \ref{Eq1} we get
$$
	h_{\nu}(f) =\int_{M} \displaystyle\Lambda^u_f(x) d\nu.
$$

Theorem \ref{pesin2}, asserts us that $\nu$ is an  SRB measure and has absolutely continuous density along the unstable foliations.
\vspace{0.3cm}

\textit{  \textbf{Step 3.} The transitive set $\Omega_k(f)$ has nonempty interior.}\vspace{0.1cm}

Since $\nu$ is a SRB measure, there exists a measurable partition $\eta$ of $M^f$ subordinate to the unstable manifolds $W^u$ of $f$ such that, for the disintegration of $\bar{\nu}=\pi_{\ast}^{-1}\nu$	 with respect to $\eta$, denoted by $\{\bar{\nu}_{\eta(\bar{x})}\}_{\bar{x}\in M^f}$, the projected conditional measures  $\pi_{\ast}\bar{\nu}_{\eta(\bar{x})}$ are equivalent to $m_{\pi(\eta(\bar{x}))}$ for $\bar{\nu}$-a.e. $\bar{x}$, where $m_{\pi(\eta(\bar{x}))}$ denotes the Lebesgue measure on $W^u(\bar{x})$ restricted to $\pi(\eta(\bar{x}))$. Hence,
$$
1=\bar{\nu}(\Omega_k^f)=\int_{M^f}\bar{\nu}_{\eta(\bar{x})}(\eta(\bar{x})\cap \Omega_k^f)d\hat{\nu}.
$$

Take $\bar{x}\in M^f$ with $\bar{\nu}_{\eta(\bar{x})}(\eta(\bar{x})\cap \Omega_k^f)=1$, then $\pi_{\ast}\bar{\nu}_{\eta(\bar{x})}(\pi(\eta(\bar{x}))\cap \Omega_k(f))=1$, by definition of the partition $\eta$, there is an unstable disc $D^u(y)\subset \pi(\eta(\bar{x}))$ such that
$\pi_{\ast}\bar{\nu}_{\eta(\bar{x})}(D^u(y))=\pi_{\ast}\bar{\nu}_{\eta(\bar{x})}(D^u(y)\cap \Omega_k(f))$. Since $\pi_{\ast}\bar{\nu}_{\eta(\bar{x})}$ is equivalent to  $m_{\pi(\eta(\bar{x}))}$, it follows that $m_{\pi(\eta(\bar{x}))}(D^u(y))=m_{\pi(\eta(\bar{x}))}(D^u(y)\cap \Omega_k(f))$. Consequently
$\Omega_k(f)$ is dense in $D^u(y)$, but $\Omega_k(f)$ is closed, then
$D^u(y)\subset\Omega_k(f)$.

Since $\Omega_k(f)$ is a repeller, it is saturated by stable manifolds, that is $W^s(\Omega_k)=\Omega_k$. Therefore, $\displaystyle\bigcup_{x\in D^u(y)}W^s(x)\subset\Omega_k(f)$.
%(see \cite{QZ}, pg. 1457) and since $\Omega(f)$ is completely $f$-invariant for any $\bar{x}\in \Omega^f$ we have $W^u(\bar{x})\subset \Omega(f)$.
%Now consider $\tilde{f}:\widetilde{M}\to \widetilde{M}$ the lifting of $f$ for universal cover $\widetilde{M}$. Since $f$ is Anosov, $\tilde{f}$ is Anosov too, see Proposition \ref{Prop-Mane}. Let
%$\widetilde{D}_{\varepsilon}^s(\tilde{y})$ a lifting of $D_{\varepsilon}^s(y)\subset D^s(y)$ in a fundamental domain and
Hence, there is an open ball
$B_r(y)\subset \displaystyle\bigcup_{x\in D^u(y)}W^s(x)\subset\Omega_k(f)$. Therefore, $\Omega_k(f)$ is a hyperbolic set with nonempty interior.

\vspace{0.2cm}

\textit{  \textbf{Step 4.}  $\Omega_k(f)=M$ .}\vspace{0.1cm}

We denote $U=int(\Omega_k(f))$. Since $\Omega(f)$ is completely invariant, each basic set is also completely invariant. In particular $\Omega_k(f)$ is completely invariant, and it is straightforward to verify that both sets $U$ and $\overline{U}$ are completely invariant too. %We will show that $\overline{U}=M=\Omega_k(f)$, which implies that $f$ is transitive.

Since $f|_{\Omega_k}$ is transitive, by Theorem \ref{Teo-transitive}, it follows that $\bar{f}|_{\Omega_k^f}$ is also transitive. Let $\bar{x}\in\Omega_k^f$ be a point with dense orbit and such that $\pi(\bar{x})=x_0\in U$. We claim that $\Omega_k^f$ is $u$-saturated, that is,
$
W^u(\bar{y})\subset \Omega_k(f)
\quad \text{for every } \bar{y}\in \Omega_k^f.
$
To prove this, let $\bar{y}\in \Omega_k^f$ and consider a sequence $\bar{f}^{n_i}(\bar{x})$ converging to $\bar{y}$. Since $x_0\in U$, there exists $\varepsilon>0$ such that
$
W^u_\varepsilon(\bar{x})\subset U.
$
By invariance, for every $n_i$,
$f^{n_i}(W^u_\varepsilon(\bar{x}))\subset U.$
Moreover, since $f$ is expanding along unstable manifolds, the diameter of $f^{n_i}(W^u_\varepsilon(\bar{x}))$ tends to infinity as $n_i\to\infty$. By Theorem \ref{PRZ-1}, the sequence $f^{n_i}(W^u_\varepsilon(\bar{x}))$ converges to $W^u(\bar{y})$. Since $\Omega_k(f)$ is closed and $f^{n_i}(W^u_\varepsilon(\bar{x}))\subset U\subset \Omega_k(f)$ for all $n_i$, we conclude that
$W^u(\bar{y})\subset \Omega_k(f)$. Therefore, $\Omega_k^f$ is $u$-saturated. Since $\Omega_k(f)$ is a repeller, it is also $s$-saturated. Consequently, $\Omega_k(f)$ is open. As it is also closed and $M$ is connected, it follows that
$\Omega_k(f)=M$.
Hence, $f$ is transitive.

\vspace{0.1cm}

\textit{  \textbf{Step 5.}  $f$ preserves a (unique) normalized $C^1$ volume form. .}\vspace{0.1cm}

 Since $f$ is $C^2$, the map $\psi:M\to\R$, defined by $\psi(x)=\log\left(\dfrac{Jf(x)}{d}\right)$ is $C^1$ and for all $p\in M$ with $f^n(p)=p$ satisfies
$$
\displaystyle\sum_{i=0}^{n-1}\psi(f^i(p))=\log\prod_{i=0}^{n-1}\frac{Jf(f^i(p))}{d}=\log\frac{Jf^n(p)}{d^n}=\log 1=0.
$$
By  Theorem \ref{TeoB}, there is a $C^1$-function $\varphi:M\to \R$, such that
$$
\log\left(\dfrac{Jf(x)}{d}\right)=\varphi(f(x))-\varphi(x)\,\, \Leftrightarrow \,\, \dfrac{Jf(x)}{d}e^{-\varphi(f(x))}=e^{-\varphi(x)}.
$$
Let the absolutely continuous measure $d\mu_{\psi}=e^{-\varphi(x)}dm$, so for any borelian $B$,

$$
\mu_{\psi}(f(B))=\int_{f(B)}e^{-\varphi(x)}dm=\int_{B}e^{-\varphi(f(x))}Jf(x)dm=\int_{B}de^{-\varphi(y)}dm=d\cdot \mu_{\psi}(B),	
$$

and so

\begin{equation}\label{eq1}
	\mu_{\psi}(B)=\frac{1}{d}\mu_{\psi}(f(B)).
\end{equation}

Now, if $A$ is a connected open set sufficiently small, since $deg(f)=d$, then $f^{-1}(A)$ are $d$ connected disjoint sets, so by (\ref{eq1}), $\mu_{\psi}(f^{-1}(A))=\mu_{\psi}(A)$. Since $\mu_{\psi}$  is regular, it follows that it is an $f$-invariant measure. Since $M$ is compact there is $C>0$ such that $e^{-\varphi(x)}\in[\frac{1}{C},C]$, so  the measure $\mu_{\psi}$ is finite. Up to normalize, we can assume $\mu_{\psi}$ is a measure of probability volume.

It remains to prove that $\mu_{\psi}$ is unique. Assume that there exist two $C^1$ volume forms, $\mu$ and $\nu$, which are $f$-invariant. Since they are smooth measures, by Theorem \ref{pesin1}, both satisfy the entropy formula, and by Theorem \ref{pesin2}, both $\mu$ and $\nu$ are SRB measures. Since $Jf^n(p)=d^n$ for all $p\in Per(f)$, an argument as in Step 2 yields Equation \ref{PesinS}. Hence, $\mu$ and $\nu$ are also inverse SRB measures. By Step 4, $f$ has a unique repeller, and therefore, by Theorem \ref{Teo-M10}, $f$ admits a unique inverse SRB measure. Consequently, we conclude that $\mu=\nu$.

\end{proof}

%###################################################################

For proof Theorem \ref{TeoC}, we using the following lemma.

\begin{lemma}[\cite{AAS-2003}, \cite{C-2003}]\label{LemC}
Let $\mathcal{M}$ be the of $f$-invariant measures and $\varphi$ be a continuous function on $M$. If $\alpha<\int\varphi d\mu<\lambda,$ $ \forall \mu\in \mathcal{M}$, the there exists $N$ such that for every $n\geq N,$ holds
$$
\displaystyle\alpha<\int\frac{1}{n}\sum_{i=0}^{n-1}\varphi(f^i(x)) d\mu<\lambda.
$$
for all $x\in M$ 	
\end{lemma}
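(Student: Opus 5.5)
We read the conclusion of Lemma~\ref{LemC} as the uniform statement: there is $N$ such that for every $n\geq N$ and every $x\in M$,
$$
\alpha<\frac{1}{n}\sum_{i=0}^{n-1}\varphi(f^i(x))<\lambda .
$$
This is equivalent to the displayed inequality holding for every $\mu\in\mathcal{M}$, since the integrand is then bounded pointwise. The plan is to argue by contradiction, using empirical measures and weak$^*$ compactness of the space of Borel probability measures on the compact space $M$. We treat the upper bound $\lambda$; the lower bound $\alpha$ is symmetric, replacing $\varphi$ by $-\varphi$.

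Suppose the conclusion fails. Then there are infinitely many $n$ for which some point violates the bound. So we can choose $n_k\to\infty$ and points $x_k\in M$ with
$$
\frac{1}{n_k}\sum_{i=0}^{n_k-1}\varphi(f^i(x_k))\geq \lambda .
$$
Define the empirical measures $\mu_k=\frac{1}{n_k}\sum_{i=0}^{n_k-1}\delta_{f^i(x_k)}$. By construction $\int\varphi\,d\mu_k\geq\lambda$. By weak$^*$ compactness (Banach--Alaoglu together with Riesz representation on the compact metric space $M$), a subsequence converges to a Borel probability measure $\mu$. Since $\varphi$ is continuous, $\int\varphi\,d\mu=\lim_k\int\varphi\,d\mu_k\geq\lambda$.

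The key step is to show that $\mu$ is $f$-invariant, so that $\mu\in\mathcal{M}$. This contradicts the hypothesis $\int\varphi\,d\mu<\lambda$. For any continuous $g$, the integrals telescope:
$$
\int g\circ f\,d\mu_k-\int g\,d\mu_k=\frac{1}{n_k}\bigl(g(f^{n_k}(x_k))-g(x_k)\bigr),
$$
and the right-hand side is bounded in absolute value by $2\|g\|_\infty/n_k\to 0$. Because $f$ is continuous, $g\circ f$ is also continuous, so we may pass to the limit on both sides and get $\int g\circ f\,d\mu=\int g\,d\mu$ for all continuous $g$. This gives $f_*\mu=\mu$.

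This invariance step is the only point requiring care. It uses the continuity of $f$ (automatic here, since $f$ is a local diffeomorphism) and the fact that the lengths $n_k$ tend to infinity. Everything else is a routine compactness argument.
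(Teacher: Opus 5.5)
Your argument is correct. It is essentially the standard empirical-measure and weak$^*$-compactness proof from the references the paper cites; the paper gives no proof of its own and only remarks that the lower bound is analogous, which you handle via $-\varphi$ (taking $N$ as the larger of the two thresholds). Your pointwise reading is also the one actually used in the proof of Theorem~\ref{TeoC}.

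One small correction: the pointwise statement is not equivalent to the displayed integral inequality over $\mathcal{M}$. For invariant $\mu$ one has $\int\frac{1}{n}\sum_{i=0}^{n-1}\varphi\circ f^i\,d\mu=\int\varphi\,d\mu$, so that inequality is just the hypothesis. The pointwise statement is instead equivalent to the inequality holding for all Borel probability measures (take $\mu=\delta_x$).
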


See \cite{AAS-2003} or \cite{C-2003} for the proof of the above Lemma. The version $\int\varphi d\mu>\alpha$ is not written in the aforementioned papers, but it analogously takes place.

\begin{proof}[Proof of Theorem \ref{TeoC}]
We denote by $\Lambda^u_f$ and  $\Lambda^s_f$ the constant values of $\Lambda^u_f(p)$ and  $\Lambda^s_f(p)$ for $p\in Per(f)$. Let $R$ the set of all simultaneously regular and recurrent points of $f$, so $\mu(R)=1$ for all measure $\mu$, $f$-invariant. As the proof of Theorem \ref{TeoA}, using the Anosov Closing Lemma to get
$$
\Lambda^u_f(x)+ \Lambda^s_f(x)=\Lambda^u+ \Lambda^s,
$$ 	
for all $x\in R$. By Lemma \ref{LemC} the limit

\begin{equation*}\label{eq4}
\displaystyle\lim_{n\to\infty}\dfrac{1}{n}\log Jf^n(x)=\Lambda^u+ \Lambda^s=\lambda,	
\end{equation*}
converges uniformly for all $x\in M$, then given $\varepsilon$ there is $N>0$ such that for $n\geq N$ and all $x\in M$ we have
\begin{equation*}
	\lambda-\varepsilon\leq \dfrac{1}{n}\log Jf^n(x) \leq \lambda+\varepsilon
\end{equation*}

\begin{equation}\label{eq51}
	e^{n(\lambda-\varepsilon)}\leq Jf^n(x) \leq e^{n(\lambda+\varepsilon)}.
\end{equation}

 Since $deg(f)=d$, by degree theory (see for instance \cite{KH95}, pag. 310-312) we have,

\begin{equation*}
\int_M Jf(x)d\mu=d\,\,\,\,\, \Rightarrow\,\,\,\,\, \int_M Jf^n(x)d\mu=d^n.	
\end{equation*}
So
\begin{equation}\label{eq5}
\frac{1}{n}\log\int_M Jf^n(x)d\mu=\log d.		
\end{equation}

Using the equalities \ref{eq51} and \ref{eq5} for $n\geq N$ we have
$$
	\lambda-\varepsilon\leq \log d \leq \lambda+\varepsilon,
$$
since $\varepsilon$ is is arbitrary $\log d=\lambda=\Lambda^u(x)+ \Lambda^s(x)$ for all $x\in M$, consequently $Jf^n(p)=d^n$ for all $p\in M$ with $f^n(p)=p$ and by Theorem \ref{TeoA}, $f$ is transitive.
\end{proof}

We remark that the equation $\log d=\lambda=\Lambda^u(x)+ \Lambda^s(x)$, stated for all $x\in M$ at the end of the proof, does not mean that every point of $M$ is Lyapunov regular, it only means that the limit $\displaystyle\lim_{n\to\infty}\dfrac{1}{n}\log Jf^n(x)$ converges to the constant $\lambda$.

%\begin{proof}[Proof of Corollary A\ref{Cor2}]
%Consider $\tilde{f}:\widetilde{M}\to \widetilde{M}$ the lifting of $f$ for universal cover $\widetilde{M}$. Since $f$ is Anosov, $\tilde{f}$ is Anosov too (see Proposition \ref{Prop-Mane}). For $x\in M$ let $\tilde{x}\in \widetilde{M}$, since $\tilde{f}$ have local product structure there is a small neighborhood $\widetilde{V}$ of $\tilde{x}$ such that for all $\tilde{y}\in\widetilde{V}$ there is a point $\tilde{z}\in\widetilde{V}\cap\widetilde{W}^u(\tilde{y})\cap\widetilde{W}^s(\tilde{x})$. Since $\tilde{x}$, $\tilde{y}$ and $\tilde{z}$ are regular points we obtain
%$$
%\Lambda^u_{\tilde{f}}(\tilde{x})
%=\displaystyle\lim_{n\to +\infty}\frac{1}{n}\log(J\tilde{f}^n(\tilde{x}))
%=\displaystyle\lim_{n\to +\infty}\frac{1}{n}\log(J\tilde{f}^n(\tilde{z}))
%=\displaystyle\lim_{n\to -\infty}\frac{1}{n}\log(J\tilde{f}^n(\tilde{y}))
%=\Lambda^u_{\tilde{f}}(\tilde{y})
%$$
%So the function $\tilde{x}\mapsto \Lambda^u_{\tilde{f}}(\tilde{x})$ is locally constant in $\widetilde{M}$ and since  $\Lambda^u_{\tilde{f}}(\tilde{x})=\Lambda^u_{f}(\pi(\tilde{x}))=\Lambda^u_{f}(x)$ the function $x\mapsto \Lambda^u_{f}(x)$ is also locally constant in $M$, by conectiness the function is  constant in $M$. Analogously using $\tilde{f}^{-1}$ we show $x\mapsto \Lambda^s_{f}(x)$ is constant. Particularly, is constant in $Per(f)$, by Corollary \ref{Cor1}, $f$ is transitive.
%	
%\end{proof}

%\section{Expanding maps  of $\mathbb{S}^1$ }
%
%For proof the Theorem \ref{},

\section{Cohomological equations and proof of Theorem \ref{TeoB}}

Cohomological equations play a important role in dynamical systems. They arise naturally in several contexts, and their applications include the construction of conjugacies, the study and computation of densities of invariant measures, and  rigidity problems.

Given  maps $f:M\to M$ and $\psi:M\to \R$, we say that a map $\varphi:M\to\R$ is a coboundary for $(f,\psi)$, if it is a solution of the cohomological equation, in the functional variable $u$
\begin{equation}\label{eqcohomological}
	\psi(x)=u(f(x))-u(x).
\end{equation}
If $u=\varphi$ is a solution, then by induction we obtain
\begin{equation}\label{eqSum1}
	\varphi(f^n(x))=\varphi(x)+\sum_{j=0}^{n-1}\psi(f^{j}(x)), \,\,\, n\in\N.
\end{equation}

If $f$ is invertible, we also have

\begin{equation}\label{eqSum2}
	\varphi(f^{-n}(x))=\varphi(x)-\sum_{j=1}^{n}\psi(f^{-j}(x)) , \,\,\, n\in\N.
\end{equation}

We remark that if $p\in M$ satisfies $f^n(p)=p$, thus
\begin{equation}
	0=\varphi(f^n(p))-\varphi(p)=\sum_{j=0}^{n-1}\psi(f^{j}(p)).
\end{equation}

Therefore, a necessary condition for the existence of a solution to the cohomological equation \eqref{eqcohomological} is that
 $\sum_{j=0}^{n-1}\psi(f^{j}(p))=0$ for every $n$-periodic point $p\in M$.

%-----------------------------

\begin{remark}\label{Remk-Liv}
We emphasize that the Livisic's Theorem for H\"{o}lder continuous observable $\psi$ holds in the context of endomorphism. The  proof is completely analogous to the invertible case, arguing with a point with dense positive orbit. For diffeomorphisms, we refer Theorem 19.2.1 of \cite{KH95}.
\end{remark}

%In order to show Theorem \ref{TeoB},

%by above remark  the existence of $\varphi$ is similar to the invertible case. It remains to be shown that  $\varphi$ is a $C^1$ function. For this, we will proof which $\varphi$ is $C^1$ along the stable foliation and unstable foliation and make use of the following lemma.

%\begin{lemma}[Journ\'e's Lemma, []]
%	Let $F$ and $G$ two transversal continuous foliation with smooth leafs and $\phi : M \to \R$ such that the restrictions $\phi$ in leafs of  $F$ and $G$ are uniformly $C^r$, then $\phi$ is $C^{r-\varepsilon}$ for any $\varepsilon>0$, in particular is $C^{r-\varepsilon}$.
%\end{lemma}

\begin{proof}[Proof of Theorem \ref{TeoB}]

Constructing the argument on the manifold $M$ is more complicated due the possibility of infinitely many unstable set through a single point. % because the unstable manifolds $W^u$ do not generally form a foliation.
 To address this, we adopt a local approach via the universal covering $\widetilde{M}.$ %From \cite{MP75} any lift $\tilde{f}$ of $f$
%which provides the structure of two transverse foliations.

\begin{proposition}[\cite{MP75}]
	If $f:M\to M$ is Anosov endomorphism,  then any lift $\tilde{f}:\widetilde{M}\to\widetilde{M}$ in $\widetilde{M}$ is an Anosov diffeomorphism.
\end{proposition}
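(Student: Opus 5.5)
The statement just made is the proposition of Mañé–Pugh that any lift $\tilde{f}$ of an Anosov endomorphism $f$ to the universal cover $\widetilde{M}$ is an Anosov diffeomorphism. The plan is to prove two things separately: that $\tilde{f}$ is a diffeomorphism, and that it has a uniformly hyperbolic invariant splitting. Throughout, $\widetilde{M}$ carries the pulled-back Riemannian metric, so the covering map $P:\widetilde{M}\to M$ is a local isometry and $P\circ\tilde f=f\circ P$.

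\emph{Step 1: $\tilde f$ is a diffeomorphism.} Since $f$ is a local diffeomorphism of a compact manifold, it is a covering map. Then $f\circ P:\widetilde{M}\to M$ is a covering, and $\tilde f$ is a lift of it through $P$. Hence $\tilde f$ is a local diffeomorphism with the path-lifting property, and so it is itself a covering map $\widetilde{M}\to\widetilde{M}$. Because $\widetilde{M}$ is simply connected, this covering is one-sheeted, so $\tilde f$ is a diffeomorphism with $\tilde f^{-1}$ of the same regularity.

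\emph{Step 2: the splitting.} Fix $\tilde x\in\widetilde{M}$. Its full orbit $(\tilde f^n(\tilde x))_{n\in\Z}$ projects under $P$ to an orbit $(x_n)_{n\in\Z}$ of $f$. Definition \ref{def1} gives a splitting $T_{x_n}M=E^s_{x_n}\oplus E^u_{x_n}$ along this orbit. Pull it back by the isometries $DP_{\tilde f^n(\tilde x)}$ to define $\widetilde E^{s}(\tilde f^n(\tilde x))$ and $\widetilde E^{u}(\tilde f^n(\tilde x))$.
\begin{itemize}
\item Invariance under $D\tilde f$ and the hyperbolic estimates with the same constants $C,\lambda$ follow at once from $DP\circ D\tilde f=Df\circ DP$ and the fact that $DP$ is an isometry.
\item The splitting is well defined: each point has exactly one $\tilde f$-orbit, because $\tilde f$ is invertible.
\item It is the unique invariant splitting with these estimates, by the usual cone argument: a vector with a nonzero unstable component grows under forward iteration, and one with a nonzero stable component grows under backward iteration.
\end{itemize}

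\emph{Step 3: continuity of the splitting (main obstacle).} The splitting on $M$ is not a continuous field, since $E^u$ depends on the past orbit. I would handle this with the inverse limit. The map $\tilde x\mapsto\big(P(\tilde f^n(\tilde x))\big)_{n\in\Z}\in M^f$ is continuous for the product topology. Theorem \ref{PRZ-1} gives $C^1$ continuous dependence of $W^{s}_\delta(\bar x)$ and $W^{u}_\delta(\bar x)$ on $\bar x\in M^f$. By part c) of Theorem \ref{Teo-Loc-manifolds}, their tangent spaces at $x_0$ are $E^s(x_0)$ and $E^u(x_0)$, so these directions vary continuously with $\bar x$. Composing the two continuous maps shows that $\widetilde E^{s}$ and $\widetilde E^{u}$ depend continuously on $\tilde x$.

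Alternatively, the standard fact that uniformly hyperbolic splittings with uniform constants are automatically continuous (closedness of the set of dominated splittings under limits) gives the same conclusion. Either way, the result is a continuous $D\tilde f$-invariant hyperbolic splitting, so $\tilde f$ is an Anosov diffeomorphism of the (generally noncompact) manifold $\widetilde{M}$, with all constants uniform because they are inherited from $M$.
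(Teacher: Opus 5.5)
Your proof is correct, but the paper contains no proof of this proposition to compare it with: it is quoted from Ma\~{n}\'{e}--Pugh and used as a black box (it also appears as Proposition \ref{Prop-Mane}). Your argument derives it directly from Definition \ref{def1}, in three parts:
\begin{itemize}
\item Covering-space theory gives invertibility of $\tilde f$. This uses only that $f$ is a local diffeomorphism of a compact manifold and that $\widetilde M$ is simply connected; hyperbolicity plays no role.
\item The local isometry $P$ transports the orbitwise splittings with the same constants $C,\lambda$. The result is well defined because $\tilde f$ singles out one orbit through each point, and the splitting along a given orbit is unique.
\item Either of your two routes then gives continuity.
\end{itemize}

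Of those two routes, the second is preferable. Definition \ref{def1} does not itself assert continuity of the splitting, so invoking Theorem \ref{PRZ-1} imports a further result of Przytycki. The limiting argument, by contrast, needs only the uniform constants and the description of $\widetilde E^{s}(\tilde x)$ and $\widetilde E^{u}(\tilde x)$ as the vectors with bounded forward, respectively backward, $D\tilde f$-iterates. It runs as follows. Take subsequential limits of $\widetilde E^{s}(\tilde x_k)$ and $\widetilde E^{u}(\tilde x_k)$, with the dimensions fixed along the subsequence. These limits satisfy the same estimates at $\tilde x$ for each fixed $n$, so they are contained in $\widetilde E^{s}(\tilde x)$ and $\widetilde E^{u}(\tilde x)$. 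Counting dimensions then forces equality.

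The citation buys the paper brevity. Your argument makes explicit that nothing beyond uniform orbitwise hyperbolicity and simple connectivity of $\widetilde M$ is needed.
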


The above proposition is sharp in our proof. Since in $M$ the unstable sets are tangled, when we pass to the universal cover level, we have a favorable structure of stable and unstable manifolds for any lift $\tilde{f}$ of $f.$ From now on we fix a lift $\tilde{f}:\widetilde{M}\to\widetilde{M}$ of $f.$

By  Remark \ref{Remk-Liv}  the existence of a Lipschtiz solution $\varphi$ is similar to the invertible case. It remains to show that  $\varphi$ is a $C^1$ function. %For this, we will proof which $\varphi$ is $C^1$ along the stable foliation and unstable foliation.

In 	the universal covering $\widetilde{M}$, fix a fundamental domain $R\subset\widetilde{M}$ and consider the canonical projection $p:\widetilde{M}\to M$ and the restriction $\hat{p}=p|_R$.
Define $\tilde{\psi}$ and $\tilde{\varphi}$ in  $\widetilde{M}$, respectively by
$\tilde{\psi}(\tilde{x})=\psi(p(\tilde{x}))$ and $\tilde{\varphi}(\tilde{x})=\varphi(p(\tilde{x}))$. It follows immediately that $\tilde{\psi}(\hat{p}^{-1}(x))=\psi(x)$ and $\tilde{\varphi}(\hat{p}^{-1}(x))=\varphi(x)$ for all $x \in p(R).$ Moreover $\tilde{f}, \tilde{\psi}$ and $\tilde{\varphi}$ satisfy the cohomological equation

$$ \tilde{\psi}(\tilde{x}) = \tilde{\varphi}(\tilde{f}(\tilde{x})) - \tilde{\varphi}(\tilde{x}).$$

Since $\tilde{f}$ is a diffeomorphism, consider the unstable and stable foliation of $\tilde{f},$  denoted respecively by $\widetilde{W}^u$ and $\widetilde{W}^s.$  We will show that $\tilde{\varphi}$ is $C^1$ along the unstable foliation $\widetilde{W}^u$.
%and similar way we can show for the stable foliation $\widetilde{W}^s$.
Take $\tilde{x}$ and $\tilde{y}$ in the same unstable manifold, by definitions of $\tilde{\psi}$, $\tilde{\varphi}$ and (\ref{eqSum2}) it is straightforward to verify that:

$$
\displaystyle\tilde{\varphi}(\tilde{x})-\tilde{\varphi}(\tilde{y})=\left[\sum_{j=1}^{n}\tilde{\psi}(\tilde{f}^{-j}(\tilde{x}))-\sum_{j=1}^{n}\tilde{\psi}(\tilde{f}^{-j}(\tilde{y}))\right]+ \tilde{\varphi}(\tilde{f}^{-n}(\tilde{x}))- \tilde{\varphi}(\tilde{f}^{-n}(\tilde{y})),
$$
for all $n\in\mathbb{N}$. Since $d(\tilde{f}^{-n}(\tilde{y}),\tilde{f}^{-n}(\tilde{x}))\to 0$ and $\tilde{\psi}$ is uniformly continuous, make the limit $n\to\infty$ we obtains
\begin{equation}\label{eqsumc}
\tilde{\varphi}(\tilde{x})-\tilde{\varphi}(\tilde{y})=\sum_{j=0}^{+\infty}\tilde{\psi}(\tilde{f}^{-j}(\tilde{x}))-\tilde{\psi}(\tilde{f}^{-j}(\tilde{y})).	
\end{equation}
Fix a unit vector $v\in E^u_{\tilde{x}}$ and let a smooth curve $\gamma:(-\varepsilon,\varepsilon)\to W^u(x)$ such that $\gamma(0)=x$ and $\gamma'(0)=v$. Since the composition $\tilde{\psi}\circ \tilde{f}^{-j}$ is $C^1$ for all $j\in\N$, replacing $x$ by $\gamma(t)$ and differentiating the right-hand side of the Equation \ref{eqsumc}  at the point $t=0$, we obtain expressions for the formal derivatives
$$
\frac{d}{dt} \tilde{\varphi}\circ \gamma\mid_{t=0}:= \sum_{j=0}^{+\infty} \frac{d}{dt}\tilde{\psi}\circ \tilde{f}^{-j}\circ\gamma \mid_{t=0}
=$$

$$=
D_v\tilde{\varphi}(x)=\sum_{j=0}^{+\infty}D_{v_j}\tilde{\psi}(\tilde{f}^{-j}(\tilde{x}))\cdot D_v(\tilde{f}^{-j}(\tilde{x}))\cdot v,
$$
where $v_j=D_v\tilde{f}^{-j}(\tilde{x})$.
Since $v\in \tilde{E}^u_{\tilde{x}}$
 follows that $||D_v(\tilde{f}^{-j}(\tilde{x}))\cdot v||\leq C\lambda^j||v||$
 and as $\psi$ is $C^1$ the directional derivatives $D_{v_j}
 \tilde{\psi}(\tilde{f}^j (\tilde{x}))$ are bounded by a constant $K > 0$, therefore, the series expression
for $D_v \tilde{\varphi}(\tilde{x})$ is uniformly convergent, and in fact they are effectively derivatives. Therefore, $\tilde{\varphi}$ is $C^1$ on the unstable foliation $\widetilde{W}^u$.  Similarly, arguing with $\tilde{f}$ on the stable leaves, we show that $\tilde{\varphi}$ is $C^1$ on the stable foliation $\widetilde{W}^s$.
%The proof that $\varphi$ is $C^1$ on the stable manifolds $W^s_f$, is completely analogous to the invertible case, see Theorem 19.2.5 of \cite{KH95}.
	
In order to close the proof, we need the following lemma.

\begin{lemma}[\cite{KH95}, Lemma 19.1.10]\label{Lema7}
	Suppose $\varphi:\R^n\to\R$ is $C^1$ along the leaves of two continuous transverse foliations $W^u$ and $W^s$ in $\R^n$. Then $\varphi$ is $C^1$.
\end{lemma}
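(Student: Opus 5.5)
The plan is to build the candidate derivative directly from the two leafwise derivatives, and then check that it is a genuine Fréchet derivative that depends continuously on the point. I read the hypothesis as it is used in the proof of Theorem \ref{TeoB}. There the leafwise derivatives $D^u\varphi(z)$ (defined on $E^u_z=T_zW^u(z)$) and $D^s\varphi(z)$ (defined on $E^s_z=T_zW^s(z)$) are given by uniformly convergent series of continuous functions. So they depend continuously on $z$, and the tangent distributions $z\mapsto E^u_z, E^s_z$ are continuous and transverse, with $E^u_z\oplus E^s_z=\R^n$.

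\emph{Step 1: candidate derivative.} For each $z$ define the linear map $L_z:\R^n\to\R$ by
\begin{equation*}
L_z(v^u+v^s)=D^u\varphi(z)v^u+D^s\varphi(z)v^s,\qquad v^u\in E^u_z,\ v^s\in E^s_z .
\end{equation*}
Because the splitting and both leafwise derivatives vary continuously, the map $z\mapsto L_z$ is continuous. It therefore suffices to prove that $\varphi(z+h)-\varphi(z)-L_zh=o(|h|)$, locally uniformly in $z$. Once this holds, $D\varphi=L$ is continuous and $\varphi$ is $C^1$.

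\emph{Step 2: local product structure with linear control.} Fix $z$ and work in linear coordinates adapted to $E^u_z\oplus E^s_z$. By continuity of the distributions, for every $\eta>0$ there is a ball $B$ around $z$ on which every tangent plane of a $W^u$-leaf lies in an $\eta$-cone about $E^u_z$, and similarly for $W^s$. Hence the local leaves in $B$ are graphs of maps with Lipschitz constant at most $\eta$. Two such graphs, one over $E^s_z$ and one over $E^u_z$, meet in exactly one point.

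For small $h$, let $w$ be the intersection of $W^s_{loc}(z)$ and $W^u_{loc}(z+h)$. A cone argument gives the following two facts:
\begin{itemize}
\item $|w-z|\le C|h|$ and $|z+h-w|\le C|h|$;
\item the leafwise arcs from $z$ to $w$ and from $w$ to $z+h$ have length at most $C|h|$.
\end{itemize}
Here $C$ depends only on the angle between $E^u$ and $E^s$. The point is that the holonomies need not be Lipschitz, yet this linear bound still holds because the leaves are uniformly nearly flat. I expect this to be the main obstacle, and the place where transversality and the $C^1$ leaves are used essentially. I would first try to prove it by writing the two graphs explicitly and solving the intersection equation as a contraction with constant of order $\eta$.

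\emph{Step 3: integrate along the two leaf arcs.} Write $\varphi(z+h)-\varphi(z)=[\varphi(w)-\varphi(z)]+[\varphi(z+h)-\varphi(w)]$, and let $\sigma$ be the $W^s$-arc from $z$ to $w$. Since $\varphi$ is $C^1$ along that leaf,
\begin{equation*}
\varphi(w)-\varphi(z)=\int_\sigma D^s\varphi(\sigma(t))\sigma'(t)\,dt .
\end{equation*}
Along $\sigma$ we have $D^s\varphi(\sigma(t))=D^s\varphi(z)+o(1)$, by continuity of the leafwise derivative. The tangents $\sigma'(t)$ lie in the $\eta$-cone around $E^s_z$, and the arc has length $O(|h|)$. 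Together these give $\varphi(w)-\varphi(z)=L_z(w-z)+O(\eta|h|)+o(|h|)$. The same argument on the $W^u$-arc gives $\varphi(z+h)-\varphi(w)=L_z(z+h-w)+O(\eta|h|)+o(|h|)$.

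Adding the two estimates and using linearity of $L_z$ yields $\varphi(z+h)-\varphi(z)-L_zh=O(\eta|h|)+o(|h|)$. Since $\eta$ can be taken arbitrarily small by shrinking $B$, this is $o(|h|)$, uniformly on compact sets. Combined with Step 1, $\varphi$ is $C^1$.
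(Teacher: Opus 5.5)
Your proof is correct. The paper gives no proof of this lemma; it only quotes Lemma 19.1.10 of \cite{KH95}. What you wrote is the standard direct argument for that statement:
\begin{itemize}
\item take $L_z=D^u\varphi(z)\pi^u_z+D^s\varphi(z)\pi^s_z$ as the candidate derivative;
\item go from $z$ to $z+h$ through the bracket point $w=W^s_{loc}(z)\cap W^u_{loc}(z+h)$;
\item use that local leaves are graphs with small Lipschitz constant to bound $|w-z|$, $|z+h-w|$ and both arc lengths by $C|h|$ (this concerns one pair of leaves, so no regularity of holonomies is needed);
\item integrate the leafwise derivatives along the two arcs.
\end{itemize}

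You were right to make the hypothesis explicit. Read literally, with each restriction of $\varphi$ to a leaf merely $C^1$, the statement is false. For example, $\varphi(x,y)=xy/(x^2+y^2)$ with $\varphi(0,0)=0$ is $C^\infty$ on every horizontal and every vertical line, but is not even continuous at the origin. What is actually needed is that the leaves are $C^1$ with continuous tangent distributions, and that the leafwise derivatives are continuous on all of $\R^n$, i.e.\ $\varphi$ is uniformly $C^1$ along the leaves. This is exactly what the uniformly convergent series in the proof of Theorem \ref{TeoB} is meant to provide.

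One small simplification in Step 3: since $\sigma'(t)\in E^s_{\sigma(t)}$, you have $D^s\varphi(\sigma(t))\sigma'(t)=L_{\sigma(t)}\sigma'(t)$. Continuity of $z\mapsto L_z$ then gives $\varphi(w)-\varphi(z)=L_z(w-z)+o(|h|)$ directly, with no $O(\eta|h|)$ term. The cone estimate is then needed only for the length bounds in Step 2.
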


The previous lemma ensure us that $\tilde{\varphi}$ is $C^1.$ Since the projections $p$ and $\hat{p}$ are smooth locally isometric maps, thus $\varphi=\tilde{\varphi} \circ \hat{p}^{-1}$  is $C^1,$  concluding the proof of Theorem \ref{TeoB}.

\end{proof}

\end{document}